\newtheorem{thm}{Theorem}[section]
\newtheorem{lem}[thm]{Lemma}
\newtheorem{prop}[thm]{Proposition}
\newtheorem{cor}[thm]{Corollary}
\newtheorem{conj}[thm]{Conjecture}
\theoremstyle{definition}
\newtheorem{rem}[thm]{Remark}
\newcommand{\C}{\mathbb{C}}
\newcommand{\F}{\mathbb{F}}
\newcommand{\rad}{\operatorname{rad}}
\renewcommand{\pmod}[1]{{\ifmmode\text{\rm\ (mod~$#1$)}\else\discretionary{}{}{\hbox{ }}\rm(mod~$#1$)\fi}}
\begin{document}

\title{Distribution of power residues over shifted subfields and maximal cliques in generalized Paley graphs}
\author{Greg Martin}
\address{Department of Mathematics \\ University of British Columbia \\ Vancouver  V6T 1Z2 \\ Canada}
\email{gerg@math.ubc.ca}
\author{Chi Hoi Yip}
\address{School of Mathematics\\ Georgia Institute of Technology\\ GA 30332\\ United States}
\email{cyip30@gatech.edu}
\subjclass[2020]{11T24, 11B30, 05C25}
\keywords{character sum, subfield, maximal clique, Paley graph, Peisert graph.}

\begin{abstract}
We derive an asymptotic formula for the number of solutions in a given subfield to certain system of equations over finite fields. As an application, we construct new families of maximal cliques in generalized Paley graphs. Given integers $d\ge2$ and $q \equiv 1 \pmod d$, we show that for each positive integer $m$ such that $\operatorname{rad}(m) \mid \operatorname{rad}(d)$, there are maximal cliques of size approximately $q/m$ in the $d$-Paley graph defined on $\mathbb{F}_{q^d}$. We also confirm a conjecture of Goryainov, Shalaginov, and Yip on the maximality of certain cliques in generalized Paley graphs, as well as an analogous conjecture of Goryainov for Peisert graphs.
\end{abstract}

\maketitle

\section{Introduction}

Throughout this paper,~$q$ will always denote a prime power and $\F_q$ the finite field with $q$ elements, and we write $\F_q^*=\F_q \setminus \{0\}$. 

Let $d\geq 2$ be an integer, and let $q\equiv 1 \pmod d$ be a sufficiently large prime power. It is well known that the set of $d$-th powers (of nonzero elements) in $\F_q$ behaves like a random set~\cite{S01}. There are different ways to make this statement precise. One typical way to do so is to consider the following well-known lemma \cite[Exercise 5.66]{LN97}, which follows from a standard application of Weil's bound on complete character sums.
\begin{lem}\label{lem1}
Let $v_{1}, \ldots, v_{k}$ be distinct elements of the finite field $\F_q$ and let $d \mid (q-1)$, where $d \geq 2$. Let $M$ be the number of solutions $x \in \F_q$ to the system of equations $\left(x-v_i\right)^{(q-1) / d}=1$ $(i=1, \ldots, k)$. Then $|M-q / d^{k}|\leq k\sqrt{q}$.
\end{lem}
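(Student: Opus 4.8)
I need to prove Lemma~\ref{lem1}: counting solutions $x\in\F_q$ to the system $(x-v_i)^{(q-1)/d}=1$ for $i=1,\dots,k$, where $v_1,\dots,v_k$ are distinct. Let me think about the standard approach via multiplicative characters.

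The key observation is the indicator function identity. Let $\chi$ be a fixed multiplicative character of $\F_q^*$ of order exactly $d$. For a nonzero element $a\in\F_q^*$, we have
$$\frac{1}{d}\sum_{j=0}^{d-1}\chi^j(a) = \begin{cases} 1 & \text{if } a^{(q-1)/d}=1 \text{ (i.e., } a \text{ is a } d\text{-th power)}\\ 0 & \text{otherwise.}\end{cases}$$
So the condition $(x-v_i)^{(q-1)/d}=1$ requires $x-v_i\neq 0$ and $x-v_i$ being a $d$-th power residue.

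So the plan is:

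First, exclude the $k$ values $x=v_1,\dots,v_k$ from consideration (at these points $x-v_i=0$ fails the condition). For all other $x$, each factor $x-v_i$ is nonzero, and we can write
$$M = \sum_{\substack{x\in\F_q\\ x\neq v_1,\dots,v_k}} \prod_{i=1}^{k}\left(\frac{1}{d}\sum_{j=0}^{d-1}\chi^j(x-v_i)\right).$$

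Second, expand the product. This gives
$$M = \frac{1}{d^k}\sum_{x}\ {\sum_{j_1=0}^{d-1}\cdots\sum_{j_k=0}^{d-1}}\ \prod_{i=1}^{k}\chi^{j_i}(x-v_i) = \frac{1}{d^k}\sum_{(j_1,\dots,j_k)}\ \sum_{x}\ \prod_{i=1}^{k}\chi^{j_i}(x-v_i),$$
where the inner sum over $x$ is over $x\neq v_1,\dots,v_k$.

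Third, separate the main term from the error term. The term with $j_1=\dots=j_k=0$ contributes $\chi^0 = 1$ identically, giving a contribution of $\frac{1}{d^k}(q-k)$ to $M$ (there are $q-k$ valid values of $x$). For any other tuple $(j_1,\dots,j_k)$, at least one $j_i$ is nonzero, so $\prod_i\chi^{j_i}(x-v_i)$ is a nontrivial multiplicative character applied to a polynomial $f(x)=\prod_i(x-v_i)^{j_i}$ (as a function on $\F_q$, after removing the finitely many roots) that is not a perfect $d$-th power of a polynomial, because the $v_i$ are distinct and $0<j_i<d$ for at least one index. By Weil's bound on character sums (the Weil bound for $\sum_x \chi(f(x))$ where $\chi$ has order $d$ and $f$ has $s$ distinct roots and is not a $d$-th power), the sum over $x\in\F_q$ (extending the sum to include $x=v_i$, which only changes it by at most $k$, or handling it directly) satisfies $\left|\sum_{x\in\F_q}\prod_i\chi^{j_i}(x-v_i)\right| \le (s-1)\sqrt{q}$ where $s$ is the number of distinct $v_i$ with $j_i\neq 0$; in any case this is at most $(k-1)\sqrt q$. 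There are $d^k - 1$ such tuples.

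**The main obstacle.** The delicate point is bookkeeping around the excluded points $x=v_i$: I should be careful whether I sum over all $x\in\F_q$ or only $x\notin\{v_1,\dots,v_k\}$, and ensure the small discrepancies (at most $k$ per tuple, but these cancel or combine cleanly) are tracked so that the final bound comes out to exactly $k\sqrt q$ rather than something slightly larger. Being slightly clever—e.g., observing that when some $j_i\neq 0$, the term $\chi^{j_i}(x-v_i)=\chi^{j_i}(0)=0$ automatically vanishes at $x=v_i$, so extending to all of $\F_q$ is free for those tuples; and the all-zero tuple contributes $q$ over all $\F_q$ but we want $q-k$, accounting for the $-k$—lets one assemble
$$M = \frac{q-k}{d^k} + \frac{1}{d^k}\sum_{(j_1,\dots,j_k)\neq \mathbf 0}\sum_{x\in\F_q}\prod_{i=1}^k\chi^{j_i}(x-v_i),$$
wait, that's not quite right either since the all-zero tuple over $\F_q$ gives $q$, so I'd get $q/d^k$ from it and need to subtract $k/d^k$ from the excluded points — but for nonzero tuples the excluded-point contributions are already zero, so actually $M = \frac{q-k}{d^k} + \frac{1}{d^k}\sum_{\mathbf j\neq\mathbf 0}\sum_{x\in\F_q}\prod_i\chi^{j_i}(x-v_i)$. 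Hmm, let me not get bogged down: the point is that with the right accounting the main term is $(q-k)/d^k$ plus an error of at most $\frac{1}{d^k}(d^k-1)(k-1)\sqrt q < k\sqrt q$, which I then need to reconcile with the claimed main term $q/d^k$ by absorbing the $-k/d^k$ into the error, giving $|M - q/d^k| \le k/d^k + (1-d^{-k})(k-1)\sqrt q \le k\sqrt q$. Getting this last inequality to land exactly at $k\sqrt q$ is the only real care-point; everything else is the routine character-sum expansion plus a citation of Weil's bound.
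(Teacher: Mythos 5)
Your proposal is correct and is exactly the ``standard application of Weil's bound'' that the paper invokes: the paper does not prove Lemma~\ref{lem1} itself (it cites \cite[Exercise 5.66]{LN97}), but its proof of Theorem~\ref{thm:main} follows the identical template of expanding $\frac1d\sum_{j=0}^{d-1}\chi^j$ and bounding the nontrivial sums by Lemma~\ref{lem:Weil}. One small correction to your bookkeeping: for a nonzero tuple $(j_1,\dots,j_k)$ the summand vanishes at $x=v_i$ only for those indices with $j_i\neq 0$, not at every excluded point $x=v_\ell$ with $j_\ell=0$ (where the product has modulus $1$); this costs an extra additive $k-s$ per tuple, where $s=\#\{i: j_i\geq 1\}$, but since $(s-1)\sqrt q+(k-s)\leq (k-1)\sqrt q$ and $k/d^k\leq\sqrt q$, the final bound still lands under $k\sqrt q$ as you anticipated.
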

In other words, the number of $x \in \F_q$ such that $x-v_i$ is a $d$-th power for all $i$ is asymptotically equal to $q/d^k$. Note that if we model the set of $d$-th powers as a random subset $S$ of $\F_q$ with density $1/d$, then the expected number of $x \in \F_q$ such that each $x-v_i \in S$ would also be $q/d^k$; in this sense, the set of $d$-th powers in $\F_q$ behaves like a random set. Lemma~\ref{lem1} and its generalizations have plenty of applications in number theory, combinatorics, and finite geometry; we refer the reader to an excellent survey by Sz\H{o}nyi \cite{S97}.

While Lemma~\ref{lem1} states that globally the solutions behave randomly, it is desirable to obtain refined information on the distribution of these solutions for various applications. In this paper, we aim to understand the ``local distribution" of these solutions within a given subfield, or equivalently the distribution of power residues in a shifted subfield. More precisely, given a field extension $K/L$ of finite fields with $|L| \equiv 1 \pmod d$, we want to count the number of solutions of $(x-v_i)^{(|K|-1)/d}=1$ in the base field $L$, where $v_{1}, \ldots, v_{k}\in K$. Without loss of generality, assume that $L=\F_q$ and $K=\F_{q^n}$. 
Our first main result gives an asymptotic formula for the number of solutions.

\begin{thm}\label{thm:main}
Let $d \geq 2$ and $q \equiv 1 \pmod d$.
For $1\leq i \leq k$,
let $v_i \in \F_{q^n}$ be of degree~$d_i$ over~$\F_q$, and
let~$M$ be the number of solutions~$x$ in the base field~$\F_q$ to the system of~$k$ equations $(x-v_i)^{(q^n-1)/d}=1$.
Suppose that $v_i$ and $v_j$ are not Galois conjugates with respect to the field extension $\F_{q^n}/\F_q$ whenever $i \neq j$.
Then
$$
\bigg |M- q \prod_{i=1}^k  \frac{\gcd(dd_i,n)}{dd_i}\bigg|\leq \bigg(\sum_{i=1}^k d_i {}-1\bigg)\sqrt{q}.
$$
\end{thm}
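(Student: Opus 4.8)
The plan is to express $M$ as an average of a product of multiplicative characters and then separate the "main term" (which comes from the trivial character) from the error terms (governed by Weil's bound). First I would let $\chi$ be a fixed multiplicative character of $\F_{q^n}^*$ of order exactly $d$. For $x \in \F_q$ with $x \neq v_i$, the condition $(x-v_i)^{(q^n-1)/d}=1$ holds if and only if $x - v_i$ is a $d$-th power in $\F_{q^n}^*$, which the usual orthogonality relation detects via $\frac1d \sum_{j=0}^{d-1} \chi^j(x-v_i)$. Multiplying over $i = 1, \dots, k$ and summing over $x \in \F_q$ (handling the finitely many $x = v_i$, which can only occur when $d_i = 1$, as a negligible adjustment of size at most $\sum d_i - 1$ or so), I obtain
$$
M = \frac{1}{d^k} \sum_{x \in \F_q} \ \prod_{i=1}^k \sum_{j_i=0}^{d-1} \chi^{j_i}(x - v_i) + O\!\left(\text{small}\right) = \frac{1}{d^k} \sum_{(j_1,\dots,j_k)} \sum_{x \in \F_q} \prod_{i=1}^k \chi^{j_i}(x-v_i) + \cdots.
$$
The inner sum $\sum_{x \in \F_q} \prod_i \chi^{j_i}(x - v_i)$ is a character sum over the \emph{subfield} $\F_q$, not over $\F_{q^n}$, so the key step is to evaluate or bound it correctly.

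The main new idea — and the step I expect to be the principal obstacle — is handling these restricted character sums over the subfield $\F_q$. The standard trick is to use the norm map: for $x \in \F_q$, one has $\chi^{j_i}(x - v_i) = \chi^{j_i}\big(\prod_{\sigma \in \mathrm{Gal}(\F_{q^n}/\F_q)} \sigma(x - v_i)^{1/[\F_{q^n}:\F_q(v_i)]}\big)$... more precisely, I would rewrite $\prod_{i} \chi^{j_i}(x-v_i) = \psi\big(\prod_i \prod_{\sigma} (x - \sigma v_i)\big)$ where $\psi = \chi \circ N$-type manipulations replace each factor by the product over Galois conjugates, turning the sum over $x\in\F_q$ into a full character sum over $\F_q$ of $\psi\big(f(x)\big)$ for a polynomial $f$ whose roots are exactly the Galois conjugates of the $v_i$. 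Because the $v_i$ are pairwise non-conjugate, the multiplicities of the roots of $f$ are controlled, and Weil's bound on $\big|\sum_{x\in\F_q} \psi(f(x))\big|$ gives an error of size $(\deg_{\mathrm{rad}} f - 1)\sqrt q$, which after summing over the $(j_1,\dots,j_k)$ and dividing by $d^k$ will collapse to at most $\big(\sum_i d_i - 1\big)\sqrt q$ — the subtlety is that the tuples contributing nontrivially must have $\psi$ nontrivial on $f$, and careful bookkeeping of which tuples $(j_1,\dots,j_k)$ yield the trivial character versus a nontrivial one is exactly where the gcd factors appear.

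For the main term, I would isolate the contribution where the resulting character $\psi$ (equivalently, the character $x \mapsto \prod_i \chi^{j_i}(x - v_i)$ restricted to $\F_q^*$ after the norm manipulation) is trivial. A character $\chi^{j_i}$ composed with the norm $N_{\F_{q^n}/\F_q(v_i)}$ and then... the relevant count is: for how many $j_i \in \{0,1,\dots,d-1\}$ is the character $y \mapsto \chi^{j_i}(N(y))$ trivial on the appropriate group? Unwinding the orders, $\chi$ has order $d$ on $\F_{q^n}^*$, and after restricting/norming down through the tower $\F_{q^n} \supseteq \F_q(v_i) \supseteq \F_q$, the induced character on $\F_q^*$ has order $dd_i / \gcd(dd_i, n)$, so it is trivial for exactly $\gcd(dd_i, n)$ choices of $j_i$ out of $d$ (wait — one must be careful, since $j_i$ ranges mod $d$, not mod $dd_i$; the clean statement is that the proportion of "good" tuples is $\prod_i \gcd(dd_i,n)/(dd_i) \cdot d^k / d^k$, and the main term $\frac{1}{d^k}\cdot (\text{number of trivial tuples}) \cdot q$ works out to $q \prod_i \gcd(dd_i,n)/(dd_i)$). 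I would verify this character-order computation carefully using the structure of cyclic groups and the fact that $N_{\F_{q^n}/\F_q}$ restricted to $\F_q^*$ is the $n$-th power map. Assembling the main term and the Weil error then yields the stated inequality; the case $k = 1$, $d_1 = 1$, $n$ arbitrary should reduce to Lemma~\ref{lem1} as a sanity check, and the case $n = 1$ trivially recovers Lemma~\ref{lem1} as well.
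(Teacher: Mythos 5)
Your proposal is correct and follows essentially the same route as the paper: detect the $d$-th power condition by characters, push the condition down to $\F_q$ via the norm identity $N_{\F_{q^n}/\F_q}(x-v_i)=g_i(x)^{n/d_i}$ (where $g_i$ is the minimal polynomial of $v_i$), count the tuples $(j_1,\dots,j_k)$ giving a trivial sum via the divisibility condition $\frac{dd_i}{\gcd(dd_i,n)}\mid j_i$, and bound the rest with Weil. The only cosmetic difference is that the paper fixes a character of the base field $\F_q$ of order $d$ from the outset (using the fact that $x$ is a $d$-th power in $\F_{q^n}$ iff its norm is a $d$-th power in $\F_q$), whereas you start with a character of $\F_{q^n}$ and norm it down afterward; your self-corrected count of trivial tuples lands on the same main term.
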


\noindent
We remark that the assumption that no two~$v_i$ are Galois conjugates is necessary. Indeed, if~$v_i$ and~$v_j$ are Galois conjugates, then for $x \in \F_q$, it is easy to verify that $(x-v_i)^{(q^n-1)/d}=1$ if and only if $(x-v_j)^{(q^n-1)/d}=1$. 

Theorem~\ref{thm:main} recovers a few known results in the literature. In particular, if $n=1$, then $d_i=1$ for all $i$ and Theorem~\ref{thm:main} recovers Lemma~\ref{lem1}. Also, in the special case $d=2$, $k=1$, $n$ even, and $v_1 \notin \F_{q^{n/2}}$, Theorem~\ref{thm:main} shows that the number of solutions is $q/2+O(n\sqrt{q})$, which recovers a result of Hirschfeld and Sz{\H{o}}nyi \cite{HS90}; moreover, they showed this special case already has some nice applications in finite geometry.

We also prove the following theorem, which complements Theorem~\ref{thm:main} when $n=2$ and $q \not \equiv 1 \pmod d$. 

\begin{thm}\label{thm:main2}
Let $d \geq 2$ and let $q^2 \equiv 1 \pmod d$. Choose $v_1, v_2, \ldots, v_k \in \F_{q^2} \setminus \F_q$, no two of which are Galois conjugates. Let $M$ be the number of solutions $x$ in $\F_q$ to the system of equations $(x-v_i)^{(q^2-1)/d}=1$ for $1 \leq i \leq k$. Then
$$
\bigg |M- \frac{q}{d^k}\bigg|\leq (2k-1)\sqrt{q}.
$$    
\end{thm}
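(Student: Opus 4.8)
The plan is to follow the multiplicative-character argument underlying Theorem~\ref{thm:main}. Fix a multiplicative character $\chi$ of $\F_{q^2}^*$ of exact order~$d$, which exists because $d\mid q^2-1$. Since each $v_i\in\F_{q^2}\setminus\F_q$, we have $x-v_i\neq 0$ for every $x\in\F_q$, so the indicator of the condition $(x-v_i)^{(q^2-1)/d}=1$ is $\frac1d\sum_{t=0}^{d-1}\chi^t(x-v_i)$. Multiplying these over $i$ and summing over $x\in\F_q$,
\[
M=\frac{1}{d^k}\sum_{\mathbf j\in\{0,1,\dots,d-1\}^k}S(\mathbf j),\qquad S(\mathbf j):=\sum_{x\in\F_q}\prod_{i=1}^k\chi^{j_i}(x-v_i).
\]
The tuple $\mathbf j=\mathbf 0$ contributes exactly $S(\mathbf 0)/d^k=q/d^k$, so it suffices to establish $|S(\mathbf j)|\le(2k-1)\sqrt q$ for every $\mathbf j\neq\mathbf 0$; in fact one expects the sharper bound $|S(\mathbf j)|\le(2\ell-1)\sqrt q$, where $\ell=\#\{i:j_i\neq0\}$, and summing this over the fewer than $d^k$ nonzero tuples recovers the stated estimate on $|M-q/d^k|$.

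The crux is therefore the bound on $S(\mathbf j)$ for $\mathbf j\neq\mathbf 0$, and this is exactly where the hypothesis $q\not\equiv1\pmod d$ creates difficulty not present in Theorem~\ref{thm:main}: the character $\chi$ lives on $\F_{q^2}^*$ while~$x$ runs over the proper subfield~$\F_q$, and since $q\not\equiv1\pmod d$ the character $\chi$ does not factor through the norm $N\colon\F_{q^2}^*\to\F_q^*$, so we cannot directly rewrite $S(\mathbf j)$ as a character sum of a polynomial over~$\F_q$ and apply Weil's bound. The way around this is to exploit the Galois action over $\F_q$. For $x\in\F_q$ one has $(x-v_i)^q=x-v_i^q$, hence $(x-v_i)^{q+1}=g_i(x)$, where $g_i(T)=(T-v_i)(T-v_i^q)\in\F_q[T]$ is the irreducible quadratic minimal polynomial of~$v_i$ over~$\F_q$; moreover $g_1,\dots,g_k$ are pairwise distinct precisely because no two~$v_i$ are Galois conjugate over~$\F_q$. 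Writing $\chi_0:=\chi|_{\F_q^*}$ and using the identity $\chi(y)^2=\chi\big(N(y)\big)\,\chi\big(y^{1-q}\big)$, valid for all $y\in\F_{q^2}^*$, with $y=x-v_i$ one obtains
\[
\chi(x-v_i)^2=\chi_0\big(g_i(x)\big)\cdot\chi\!\left(\frac{x-v_i}{x-v_i^q}\right),
\]
in which the first factor is a genuine multiplicative character of $\F_q^*$ applied to a polynomial over~$\F_q$, while the second is $\chi$ evaluated at the norm-one element $(x-v_i)/(x-v_i^q)$, which as $x$ varies gives a rational parametrization over $\F_q$ of (a piece of) the norm-one torus. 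Via this identity $S(\mathbf j)$ — more precisely, $S(2\mathbf j)$, which when $d$ is odd ranges over all tuples — is realized as an $\F_q$-point count on an explicit curve over~$\F_q$ whose ramification is supported on $\rad\big(\prod_{j_i\neq0}g_i\big)$, a divisor of degree $2\ell$; Weil's bound for such sums then yields $|S(\mathbf j)|\le(2\ell-1)\sqrt q$, with $\mathbf j\neq\mathbf 0$ together with the distinctness of the~$g_i$ ensuring that the relevant pulled-back character is nontrivial, so the estimate is non-vacuous.

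I expect the main obstacle to be completing this last step when $d$ is even (and~$q$ odd): since $\gcd(q+1,q-1)=2$, the displayed identity only controls $\chi(x-v_i)^2$, hence directly handles $S(\mathbf j)$ only for tuples with all entries even, and covering the remaining tuples requires extra work — decomposing $\chi$ on the index-two subgroup $\F_q^*\cdot U\le\F_{q^2}^*$ (where $U$ is the norm-one subgroup) and treating the complementary part via the $2$-adic structure of~$d$ relative to $q\pm1$. The second delicate point is the bookkeeping needed to obtain exactly the constant $2k-1$ rather than a larger multiple of $\sqrt q$: this hinges on each $g_i$ having degree~$2$ and on the $g_i$ being distinct, mirroring the $n=2$, $d_i=2$ instance of Theorem~\ref{thm:main}, which indeed already gives the bound $(2k-1)\sqrt q$.
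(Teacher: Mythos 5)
Your reduction via orthogonality is exactly the paper's: the main term $q/d^k$ comes from $\mathbf j=\mathbf 0$, and everything hinges on showing $|S(\mathbf j)|\le(2k-1)\sqrt q$ for $\mathbf j\neq\mathbf 0$. But that key estimate is not actually proved in your write-up, and the route you sketch for it has a genuine gap that you yourself flag. The identity $\chi(y)^2=\chi\big(N(y)\big)\chi(y^{1-q})$ only controls \emph{even} powers of $\chi(x-v_i)$; since $[\F_{q^2}^*:\F_q^*U]=2$ for odd $q$ (where $U$ is the norm-one subgroup), squaring loses exactly the information needed for the tuples with some $j_i$ odd whenever $d$ is even --- which includes the most important cases for the applications (e.g.\ $d=2$). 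You defer this to ``extra work'' that is never carried out. Moreover, even for the tuples your identity does reach, the factor $\chi\big((x-v_i)/(x-v_i^q)\big)$ is a character on the norm-one torus evaluated along a rational parametrization, and the assertion that the resulting mixed sum obeys a Weil bound with the precise constant $2\ell-1$ requires a conductor/genus computation (or an appeal to a general theorem) that you do not supply. So the proposal establishes the framework but not the theorem.

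The paper closes exactly this gap by a different mechanism: a Weil bound for Dirichlet character sums over $\F_q[T]$ (Lemma~\ref{lem:FF}, from Wan). Since $\F_q[T]/\big((T-v_i)(T-v_i^q)\big)\cong\F_{q^2}$, the rule $g\mapsto\chi^{j_i}\big(g(v_i)\big)$ defines a Dirichlet character modulo the irreducible quadratic $F_i=(T-v_i)(T-v_i^q)$, nontrivial whenever $\chi^{j_i}\neq1$ because $v_i$ generates $\F_{q^2}$ over $\F_q$; the product over $i$ is a nontrivial Dirichlet character modulo $F=\prod_iF_i$ of degree $2k$ (here the non-conjugacy hypothesis gives coprimality of the $F_i$), and $S(\mathbf j)=\sum_{a\in\F_q}\chi_F(a-T)$ is then bounded by $(2k-1)\sqrt q$ in one stroke. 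This is Theorem~\ref{thm:main3} in the text. In effect, the function-field formulation packages precisely the ``curve over $\F_q$ with ramification along $\prod g_i$'' that you were trying to build by hand, and it does so uniformly in the parity of the $j_i$. If you want to complete your argument without that input, you would need to carry out the index-two decomposition and the torus sum estimate explicitly; as written, the proof is incomplete.
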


Note that if $q \equiv -1 \pmod d$, then we always have $(x-v)^{(q^2-1)/d}=1$ when $v,x \in \F_q$ and $x \neq v$. Also note that the condition that no two~$v_i$ are Galois conjugates is necessary, for the same reason as in Theorem~\ref{thm:main}. We remark that some related results have appeared in \cite[Corollaries~2.4 and~2.5]{W97}. However, the statements of those two corollaries omitted some necessary hypotheses\footnote{Private communication with Daqing Wan. For example, \cite[Corollary 2.4]{W97} does not apply to the case $m=2, n=1$ and $f_1(T)=(T-a)(T-a^q)$ for $a \in \F_{q^2} \setminus \F_q$.}. In Section~\ref{sec3}, we shall state and prove a corrected version for the sake of completeness. In particular, we will prove Theorem~\ref{thm:main3} and then deduce Theorem~\ref{thm:main2} as a consequence. One can use Theorem~\ref{thm:main3} to prove a more general result on the distribution of power residues over shifted subfields. 

Before discussing the applications of Theorem~\ref{thm:main} and Theorem~\ref{thm:main2}, we introduce some necessary terminology. Generalized Paley graphs are well-studied Cayley graphs, first introduced by Cohen \cite{SC} in 1988 and reintroduced by several groups of authors since then. Let $d\ge2$ be an integer and~$q$ a prime power such that $q \equiv 1 \pmod {2d}$. The {\em $d$-Paley graph} on~$\F_q$, denoted $GP(q,d)$, is the graph with vertex set~$\F_q$ such that two vertices are adjacent if their difference is a $d$-th power in~$\F_q^*$. Note that the condition $q \equiv 1 \pmod {2d}$ guarantees the graph is undirected and non-degenerate; see for example \cite[Section 4]{SC}. The well-known {\em Paley graphs} are simply $2$-Paley graphs. We refer to \cite{BEHW96, GKSV18, GMS22, GSY23, Y22, Y23} for extensive discussions on different constructions of maximal cliques in generalized Paley graphs; we also refer to \cite[Proposition 2]{S92} for a nice connection between maximal cliques in Paley graphs and minimal blocking sets.

Recall that a {\em clique}~$C$ in a graph~$G$ is a subset of vertices of $G$ such that every two distinct vertices in $C$ are adjacent. A {\em maximal clique} is a clique that cannot be extended by adding a new vertex. Every {\em maximum clique} (a clique of highest cardinality) is a maximal clique, but there can be smaller maximal cliques as well. As an application of Theorem~\ref{thm:main}, we construct new families of maximal cliques in generalized Paley graphs of the form $GP(q^d,d)$, where~$q$ is an odd prime power such that $q \equiv 1 \pmod d$. It is known that if $q>(d-1)^2$, then the subfield $\F_q$ forms a maximal clique in such a graph \cite[Theorem 1.2]{Y23}. Our next result constructs many new maximal cliques with smaller sizes. To state the theorem, recall that for a positive integer $m$, its \emph{radical} $\rad(m)$ is defined to be the product of the distinct prime divisors of $m$. In particular, $\rad(m)\mid\rad(d)$ if and only if every prime dividing~$m$ also divides~$d$. We let $\log_r$ denote the base-$r$ logarithm.

\begin{thm}\label{thm:newmaxclique}
Let $d \geq 2$ and let $r$ be the smallest prime divisor of $d$. Let $m$ be a positive integer such that $\rad(m) \mid \rad(d)$. If $q \equiv 1 \pmod d$ is an odd prime power such that
$$
q>(8\log_r m+4)d^2m^2,
$$
then there is a maximal clique $C$ in $GP(q^d,d)$ with size $|C|=\frac{q}{m}+O(d\log m\sqrt{q})$. More precisely,
$$
\frac{q}{m}-d\log_r m \cdot \sqrt{q}\leq |C|\leq \frac{q}{m}+d\log_r m \cdot (\sqrt{q}+1).
$$    
\end{thm}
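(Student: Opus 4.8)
The plan is to produce the clique~$C$ as (a scaled/translated copy of) the set of elements $x\in\F_q$ that are $d$-th powers in $\F_{q^d}$ after subtracting each of a carefully chosen set of ``test points'' $v_1,\dots,v_k\in\F_{q^d}$, and then to use Theorem~\ref{thm:main} twice: once to guarantee that $C$ is large, and once to guarantee that it is maximal. First I would recall (as in \cite[Theorem 1.2]{Y23}) that $\F_q$ itself is a clique in $GP(q^d,d)$, because for $x,y\in\F_q$ distinct, $x-y\in\F_q^*$ and every element of $\F_q^*$ is a $d$-th power in $\F_{q^d}$ (its order divides $q-1$, which divides $(q^d-1)/d$ since $d\mid q-1$). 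Thus any subset of $\F_q$ is automatically a clique; the whole content is in choosing a subset that is \emph{maximal} and has size about $q/m$.

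The key idea is that to cut $\F_q$ down to relative density $1/m$ we should impose, for a chosen element $w$ of degree~$d$ over~$\F_q$, conditions involving the ``$m$-th power'' structure. Concretely, write $m=p_1^{a_1}\cdots p_t^{a_t}$ with each $p_j\mid d$; one picks $v_i$'s of appropriate degrees $d_i$ over $\F_q$ so that the product $\prod_{i=1}^k \gcd(dd_i,n)/(dd_i)$ appearing in Theorem~\ref{thm:main} (with $n=d$) equals exactly $1/m$. The natural choice is to take, for each prime power $p_j^{a_j}\|m$, roughly $a_j\log_{p_j}(\text{something})$ test points $v_i$ lying in intermediate fields, chosen so that each contributes a factor $1/p_j$; the total number $k$ of test points is then $\sum_j a_j \le \log_r m$, which is why the bound $d\log_r m\cdot\sqrt q$ (coming from $\sum d_i \le d\log_r m$) and the hypothesis $q>(8\log_r m+4)d^2m^2$ arise. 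With such a choice, set
$$
C=\{x\in\F_q: (x-v_i)^{(q^d-1)/d}=1 \text{ for } i=1,\dots,k\}.
$$
Theorem~\ref{thm:main} (applicable once we check the $v_i$ are pairwise non-conjugate over $\F_q$) gives $\big||C|-q/m\big|\le(\sum_i d_i-1)\sqrt q\le d\log_r m\cdot\sqrt q$, which is the lower bound, and $C$ is a clique since $C\subseteq\F_q$.

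For maximality I would argue by contradiction: suppose $z\in\F_{q^d}\setminus C$ could be added, i.e. $z-c$ is a nonzero $d$-th power in $\F_{q^d}$ for every $c\in C$. If $z\in\F_q$, then $z\notin C$ means $(z-v_{i_0})^{(q^d-1)/d}\neq1$ for some $i_0$; but then I enlarge the system by the single equation forcing $x-v_{i_0}$ to lie in a \emph{nontrivial} coset of $d$-th powers and apply Theorem~\ref{thm:main} (or a coset version of it, obtained by the same character-sum argument) to see that the number of $c\in C$ with $c-z$ also a $d$-th power is at most $q/(dm)+O(d\log m\sqrt q)<|C|$ — a contradiction, since adjacency to every vertex of $C$ would force \emph{all} of $C$ into that count. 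If instead $z\notin\F_q$, I want to show $z$ cannot dominate all of $C$; here I would count $c\in C$ with $(z-c)^{(q^d-1)/d}=1$ by again invoking Theorem~\ref{thm:main}, now with $z$ adjoined as an extra point $v_{k+1}$ of degree $d_{k+1}=[\F_q(z):\F_q]>1$ over $\F_q$ (legitimate provided $z$ is not a conjugate of any $v_i$, which one arranges by the freedom in choosing the $v_i$), giving a count of size $q\cdot\frac1m\cdot\frac{\gcd(dd_{k+1},d)}{dd_{k+1}}+O(d\log m\sqrt q)$, which is a strict fraction of $|C|$ once $q$ exceeds the stated threshold. The upper bound $|C|\le q/m+d\log_r m(\sqrt q+1)$ is then just the other side of Theorem~\ref{thm:main} (the extra $+1$ absorbing rounding, e.g. from the at most one value $x=v_i$ which lies in no valid coset).

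The main obstacle I anticipate is the bookkeeping in the first paragraph: exhibiting an explicit family $v_1,\dots,v_k$ of pairwise non-conjugate elements, of controlled degrees $d_i$ over $\F_q$, such that $\prod_i \gcd(dd_i,d)/(dd_i)$ is \emph{exactly} $1/m$ (not merely $\asymp 1/m$), while simultaneously keeping $\sum_i d_i\le d\log_r m$ so that the error term in Theorem~\ref{thm:main} stays below the gaps needed in the maximality argument; this is a number-theoretic construction that has to be done prime-by-prime on $m$ and then assembled. A secondary technical point is the ``nontrivial coset'' variant of Theorem~\ref{thm:main} needed for the maximality step when $z\in\F_q$; this should follow by the identical Weil-bound argument (replacing the condition $(x-v_i)^{(q^d-1)/d}=1$ by $(x-v_i)^{(q^d-1)/d}=\zeta$ for a fixed $d$-th root of unity $\zeta\ne1$), but it needs to be stated. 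Everything else — that subsets of $\F_q$ are cliques, and the final inequality chasing against $q>(8\log_r m+4)d^2m^2$ — is routine.
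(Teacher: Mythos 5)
There is a fatal gap in your construction: you define $C=\{x\in\F_q: (x-v_i)^{(q^d-1)/d}=1 \text{ for all } i\}$ as a subset of $\F_q$, but such a set can \emph{never} be a maximal clique. As you yourself observe, every element of $\F_q^*$ is a $d$-th power in $\F_{q^d}$ (since $d\mid q-1$ forces $(q-1)\mid(q^d-1)/d$), so $\F_q$ is a clique and any $z\in\F_q\setminus C$ is automatically adjacent to every $c\in C$; hence $C\cup\{z\}$ is again a clique. Your maximality argument for the case $z\in\F_q$ conflates two different conditions: the adjacency condition (that $z-c$ be a $d$-th power for each $c\in C$, which always holds) with the membership condition (that $(z-v_{i_0})^{(q^d-1)/d}=1$). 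The failure of the latter does not obstruct adding $z$, because the $v_i$ are not vertices of your clique. The fix --- and the key idea of the paper's proof --- is to take $C=D\cup D'$ where $D=\{v_1,\dots,v_k\}$ and $D'$ is their common $\F_q$-neighborhood; then $z\in\F_q\setminus D'$ genuinely cannot be added because it fails to be adjacent to some $v_{i_0}\in C$. But this creates a new obligation you never address: the $v_i$ must be pairwise adjacent (and pairwise non-conjugate, with prescribed degrees $d_i$ satisfying $\prod_i d_i=m$ and $d_1\mid\cdots\mid d_k\mid d$). The paper establishes this by an inductive greedy construction (its Proposition on cliques with prescribed degrees, using Lemma~\ref{lem1} inside the intermediate subfields $\F_{q^{d_j}}$ together with the fact that the induced subgraph on $\F_{q^{d'}}$ is $GP(q^{d'},d')$); this is where a substantial part of the hypothesis on $q$ is consumed.

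Two further points. First, the ``bookkeeping'' you flag as the main anticipated obstacle is handled cleanly in the paper by an elementary lemma: writing $d_i=\prod_{p^{k+1-i}\mid m}p$ for $i=1,\dots,k$ with $k$ the largest exponent in $m$ gives $\prod_i d_i=m$, $d_1\mid\cdots\mid d_k\mid d$, each $d_i\ge r$, and $k\le\log_r m$; then $\prod_i\gcd(dd_i,d)/(dd_i)=\prod_i 1/d_i=1/m$ exactly, and $\sum_i d_i\le kd\le d\log_r m$ as you wanted. Second, even the corrected clique $D\cup D'$ need not itself be maximal: Galois conjugates of the $v_i$ have the same $\F_q$-neighborhood as the $v_i$ and may be adjoinable. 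The paper therefore passes to a maximal clique $C'\supseteq D\cup D'$ and shows every element of $C'\setminus(D\cup D')$ is such a conjugate, contributing at most $k(d-1)$ extra vertices --- which is why the theorem asserts a two-sided range for $|C|$ rather than an exact size. Your remaining steps (the ``extra test point'' argument for $z\notin\F_q$, using the factor $\gcd(de,d)/(de)=1/e\le 1/r$ for $e=[\F_q(z):\F_q]>1$, and the comparison against $q>(8\log_r m+4)d^2m^2$) do match the paper's maximality argument; no ``nontrivial coset'' variant of Theorem~\ref{thm:main} is needed once the construction is repaired.
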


In particular, if $d \geq 2$ is fixed and $m$ is a positive integer such that $\rad(m) \mid \rad(d)$, then there is a maximal clique of size approximately $q/m$ in $GP(q^d,d)$, provided that $q \equiv 1 \pmod d$ is sufficiently large. (We believe that these are the only possible approximate sizes of maximal cliques; see Conjecture~\ref{spectrum conj} for a more precise statement.)  When $d=2$ and $m$
is a power of $2$, this recovers a result of Hirschfeld and Sz{\H{o}}nyi on maximal cliques in Paley graphs of square order, which they did not state explicitly but is implicit in their constructions of minimal blocking sets \cite{HS91, S92}. It would be interesting to see if Theorem~\ref{thm:newmaxclique} has some applications in finite geometry.

For our second application, we focus on generalized Paley graphs of the form $GP(q^2,d)$, where~$q$ is an odd prime power such that $d \mid (q+1)$. Such generalized Paley graphs are of particular interest since the subfield $\F_q$ forms a maximum clique, and it is known that the only maximum clique containing~$0$ and~$1$ is the subfield~$\F_q$; see Blokhuis~\cite{B84} for Paley graphs (that is, when $d=2$) and Sziklai~\cite{S99} for general~$d$. This characterization of maximum cliques is also known as the Erd\H{o}s--Ko--Rado (EKR) theorem for the graph $GP(q^2,d)$, in the sense that all maximum cliques are given by affine translates of the subfield $\F_q$; see related discussions in \cite{AY22, Y24} and \cite[Section~5.9]{GM15}.  

Baker, Ebert, Hemmeter,
and Woldar \cite{BEHW96} constructed maximal cliques that are not maximum in the Paley graph $GP(q^2,2)$ as the following. Pick an element $\alpha \in \F_{q^2}\setminus \F_q$, and consider the clique~$C$ obtained by $\alpha$ and its \emph{$\F_q$-neighborhood}, that is,
$$
C=\{\alpha\} \cup \{x \in \F_q: \alpha-x \in (\F_{q^2}^*)^2\}.
$$
They showed that $C$ is a maximal clique of size $\frac{q+1}{2}$ if $q \equiv 1 \pmod 4$, and $C \cup \{\alpha^q\}$ is a maximal clique of size $\frac{q+3}{2}$ if $q \equiv 3 \pmod 4$. Their construction is also known as the \emph{$(\F_q, \alpha)$-construction} \cite{GKSV18,GMS22}. Analogously, Goryainov, Shalaginov, and Yip~\cite{GSY23} considered a similar construction of cliques in $GP(q^2,d)$, where $d \mid (q+1)$ and $d \geq 3$. Let $u \in \F_{q^2} \setminus \F_q$ and let $N(u)$ be the $\F_q$-neighborhood of $u$ in $GP(q^2,d)$. Let $C_u=N(u)\cup \{u\}$ if 
$d \nmid \frac{q+1}{2}$, and $C_u=N(u) \cup \{u,u^q\}$ if $d \mid \frac{q+1}{2}$, which they described~\cite[Proposition 4.6]{GSY23} as a clique from the $(\F_q, \alpha)$-construction. They conjectured that such a clique $C_u$ is maximal if $3 \leq d \leq \frac{q+1}{3}$ and $p \nmid (d-1)$, where $p$ is the characteristic of the field $\F_q$ \cite[Conjecture 4.7]{GSY23}; see \cite[Corollary 3.4]{GSY23} and \cite[Example 5.9]{GSY23} for the necessity of these two additional assumptions. Some partial progress of the conjecture can be found in \cite[Section 5]{GSY23}. In particular, they proved the conjecture when $\gcd(q-1,\frac{q+1}{d}-2)\in \{1,2\}$ \cite[Corollary 5.4]{GSY23}. We are able to confirm their conjecture when $q$ is sufficiently large compared~to~$d$.

\begin{thm}\label{thm:GP}
Let $d \geq 3$. If $q \equiv -1 \pmod d$ is an odd prime power such that  $q>10d^4/(d-1)^2$, then in $GP(q^2,d)$, cliques obtained from the $(\F_q, \alpha)$-construction are maximal. More precisely, if $u \in \F_{q^2}\setminus \F_q$ and $N(u)$ is the $\F_q$-neighborhood of $u$ in $GP(q^2,d)$, then the following statements hold: 
\begin{enumerate}
    \item[(a)] If $d \nmid \frac{q+1}{2}$, then $N(u) \cup \{u\}$ forms a maximal clique of size $\frac{q+1}{d}$ in $GP(q^2,d)$.
    \item[(b)] If $d \mid \frac{q+1}{2}$, then $N(u) \cup \{u,u^q\}$ forms a maximal clique of size $\frac{q+d+1}{d}$ in $GP(q^2,d)$.
\end{enumerate}    
\end{thm}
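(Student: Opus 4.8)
The plan is to establish maximality by showing that no vertex of $GP(q^2,d)$ outside $C_u$ is adjacent to every vertex of $C_u$; the cardinalities in (a) and (b) will emerge along the way, and the fact that $C_u$ is itself a clique (cf.\ \cite[Proposition 4.6]{GSY23}) will be re-derived in passing. First I would record two structural facts. Since $d\mid q+1$, for distinct $x,y\in\F_q$ we have $(x-y)^{(q^2-1)/d}=\big((x-y)^{q-1}\big)^{(q+1)/d}=1$, so $\F_q$ is a clique in $GP(q^2,d)$ and hence so is $N(u):=\{x\in\F_q:(u-x)^{(q^2-1)/d}=1\}$, the $\F_q$-neighborhood of $u$. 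Next, I claim $|N(u)|=\tfrac{q+1}{d}-1$ for every $u\in\F_{q^2}\setminus\F_q$: the rational map $t(x):=(u-x)^{q-1}=(u^q-x)/(u-x)$ is a Möbius transformation carrying $\F_q$ bijectively onto $\mu_{q+1}\setminus\{1\}$, where $\mu_{q+1}$ denotes the (cyclic) group of $(q+1)$-th roots of unity in $\F_{q^2}^{*}$; since $(u-x)^{(q^2-1)/d}=t(x)^{(q+1)/d}$, the element $x$ lies in $N(u)$ exactly when $t(x)$ lies in the unique subgroup of $\mu_{q+1}$ of order $(q+1)/d$, and as that subgroup contains $1\notin t(\F_q)$ we obtain $|N(u)|=\tfrac{q+1}{d}-1$. (Only the weaker bound $|N(u)|=q/d+O(\sqrt q)$ from Theorem~\ref{thm:main2} with $k=1$ is really needed, at the cost of a larger threshold.)

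Second, I would determine when $u\sim u^q$. Because $(u-u^q)^{q-1}=-1$, we get $(u-u^q)^{(q^2-1)/d}=(-1)^{(q+1)/d}$, which is $1$ precisely when $d\mid\tfrac{q+1}{2}$ --- exactly the dichotomy in the statement. Moreover, by the remark following Theorem~\ref{thm:main}, an element $x\in\F_q$ is adjacent to $u^q$ if and only if it is adjacent to $u$, so $u^q$ has $\F_q$-neighborhood $N(u)$ as well. Together these verify that $C_u=N(u)\cup\{u\}$ is a clique of size $|N(u)|+1=\tfrac{q+1}{d}$ when $d\nmid\tfrac{q+1}{2}$, and that $C_u=N(u)\cup\{u,u^q\}$ is a clique of size $|N(u)|+2=\tfrac{q+d+1}{d}$ when $d\mid\tfrac{q+1}{2}$.

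For maximality, suppose $v\in\F_{q^2}$ is adjacent to every vertex of $C_u$; the goal is $v\in C_u$. If $v\in\F_q$ then $v$ is an $\F_q$-neighbor of $u$, so $v\in N(u)\subseteq C_u$. If $v\in\{u,u^q\}$ then $v\in C_u$ --- noting that in case (a) the option $v=u^q$ is automatically ruled out since there $u^q\not\sim u$. In the remaining case $v\in\F_{q^2}\setminus\F_q$ is not a Galois conjugate of $u$, so Theorem~\ref{thm:main2} applied with $k=2$ to the pair $\{u,v\}$ gives that $u$ and $v$ have at most $q/d^2+3\sqrt q$ common $\F_q$-neighbors. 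But each element of $N(u)$ is a common $\F_q$-neighbor of $u$ and $v$ (of $u$ by definition of $N(u)$, of $v$ by hypothesis), so $\tfrac{q+1}{d}-1\le q/d^2+3\sqrt q$, that is $(d-1)(q-d)\le 3d^2\sqrt q$; a short computation shows this fails as soon as $q>10d^4/(d-1)^2$, a contradiction. Hence $C_u$ is maximal.

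The one step I expect to need genuine care is the separate treatment of the Galois conjugate $u^q$: Theorem~\ref{thm:main2} explicitly excludes Galois-conjugate pairs, and indeed $u^q$ shares the full $\F_q$-neighborhood $N(u)$, so it cannot be eliminated by any character-sum count and must be handled directly --- which is precisely the computation $(u-u^q)^{(q^2-1)/d}=(-1)^{(q+1)/d}$ that splits the argument into cases (a) and (b). The only other thing to watch is quantitative: to reach the stated threshold $q>10d^4/(d-1)^2$ (rather than a weaker one) one should feed the exact value $|N(u)|=\tfrac{q+1}{d}-1$ into the final inequality rather than the estimate coming from Theorem~\ref{thm:main2}.
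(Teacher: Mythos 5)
Your proposal is correct and follows essentially the same route as the paper: establish $|N(u)|=\tfrac{q+1}{d}-1$, settle the adjacency of $u$ and $u^q$ to split into cases (a) and (b), and then rule out any further extension by applying Theorem~\ref{thm:main2} with $k=2$ to a hypothetical non-conjugate $v$ and comparing $\tfrac{q+1}{d}-1$ with $\tfrac{q}{d^2}+3\sqrt q$ under the hypothesis $q>10d^4/(d-1)^2$. The only differences are cosmetic but pleasant: you re-derive $|N(u)|$ via the M\"obius map $x\mapsto (u^q-x)/(u-x)$ onto $\mu_{q+1}\setminus\{1\}$ rather than citing \cite[Proposition 4.6]{GSY23}, and you decide when $u\sim u^q$ via $(u-u^q)^{q-1}=-1$ instead of the paper's explicit computation with a basis $\{1,\alpha\}$, $\alpha^2$ a non-square.
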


Our techniques extend to a larger family of Cayley graphs, and in particular to Peisert graphs. Let $p \equiv 3 \pmod 4$ be a prime and $q=p^r$ with $r$ even. The {\em Peisert graph} of order $q=p^r$, denoted $P^*_q$, is defined to be the graph with vertex set $\F_{q}$ such that two vertices are adjacent if their difference belongs to the set $S=\{g^j\colon j \equiv 0,1 \pmod 4\}$, where $g$ is a primitive root of the field~$\F_q$. Note the structure of the graph does not depend on the choice of the primitive root. Peisert~\cite{P01} showed that the only self-complementary symmetric graphs are Paley graphs, Peisert graphs, and an exceptional graph of order $529$. Asgarli and Yip \cite[Theorem 1.4]{AY22} showed an analgoue of the EKR theorem for the Peisert graph $P_{q^2}^*$ with $q \equiv 3 \pmod 4$; more precisely, the only maximum clique containing $0,1$ in $P_{q^2}^*$ is the subfield~$\F_q$ if $q=p^n$ and $p>8.2n^2$, where~$p$ is the characteristic of $\F_q$. Sergey Goryainov conjectured\footnote{Private communication} that $(\F_q, \alpha)$-constructions also give maximal cliques in $P_{q^2}^*$ with $q \equiv 3 \pmod 4$ and $q \geq 7$, based on the similarity between Paley graphs and Peisert graphs. We confirm his conjecture using a similar method.

\begin{thm}\label{thm:P*}
Let $q\equiv 3 \pmod 4$ be a prime power such that $q \geq 7$. Let $u \in \F_{q^2}\setminus \F_q$ and let $N(u)$ be the $\F_q$-neighborhood of $u$ in $P_{q^2}^*$. Then $N(u) \cup \{u\}$ forms a maximal clique of size $\frac{q+1}{2}$ in $P_{q^2}^*$.
\end{thm}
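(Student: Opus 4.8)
**Proof proposal for Theorem~\ref{thm:P*}.**

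The plan is to mimic the structure of the proof of Theorem~\ref{thm:GP}, exploiting the parallel between Paley graphs and Peisert graphs. First I would verify that $C := N(u) \cup \{u\}$ is actually a clique, which reduces to checking that for $x, y \in N(u)$ distinct, $x - y$ lies in the Peisert connection set $S = \{g^j : j \equiv 0, 1 \pmod 4\}$. Since $x, y \in \F_q \subseteq \F_{q^2}$ and $q \equiv 3 \pmod 4$, the key fact is that $S \cap \F_q^* = (\F_q^*)^2$: an element of $\F_q^*$ is a fourth power or $g$ times a fourth power in $\F_{q^2}$ precisely when it is a square in $\F_q$ (this uses $q \equiv 3 \pmod 4$ so that $g^{(q+1)/2}$ generates $\F_q^*$ and the index computations work out). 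Thus $N(u)$ is exactly the $\F_q$-neighborhood of $u$ in $P_{q^2}^*$ intersected with a coset structure that already forms a Paley-type clique inside $\F_q$; adjacency among the $\F_q$-vertices follows, and adjacency of $u$ to each $x \in N(u)$ is the definition of $N(u)$.

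Next I would compute $|C|$. The set $N(u) = \{x \in \F_q : u - x \in S\}$; writing $\chi$ for the quartic character of $\F_{q^2}$, membership in $S$ is detected by a short character-sum indicator, and since no two of $u$ and its single Galois conjugate $u^q$ coincide (as $u \notin \F_q$), an argument in the spirit of Theorem~\ref{thm:main2} (the case $n = 2$, $k = 1$, $d = 4$ applied to the relevant character, together with the observation that over $\F_q$ the quartic-residue condition in $\F_{q^2}$ collapses to a quadratic-residue condition) gives $|N(u)| = \frac{q+1}{2} - 1 = \frac{q-1}{2}$ exactly — in fact for the Peisert graph of square order the $\F_q$-neighborhood has a clean exact size, which I would pin down directly rather than through the error-term estimate. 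Hence $|C| = \frac{q-1}{2} + 1 = \frac{q+1}{2}$, matching the claimed size; note this is strictly less than $q = |\F_q|$, so $C$ is not a maximum clique.

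The heart of the matter is \textbf{maximality}: I must show no vertex $w \in \F_{q^2}$ can be added to $C$. If $w \in \F_q \setminus N(u)$, then $w$ is non-adjacent to $u$ and we are done immediately. The real case is $w \in \F_{q^2} \setminus \F_q$, $w \neq u$. Suppose $w$ were adjacent to $u$ and to every $x \in N(u)$. Adjacency of $w$ to $u$ means $u - w \in S$. Adjacency of $w$ to all of $N(u)$ means: for every $x \in \F_q$ with $u - x \in S$, also $w - x \in S$. I would translate "$w - x \in S$ for all $x \in N(u)$" into a statement counting solutions $x \in \F_q$ of a simultaneous system of power-residue conditions on $u - x$ and $w - x$ in $\F_{q^2}$, and compare it against $|N(u)|$. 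Concretely, one shows that if $w \neq u$ and $w \neq u^q$ then the set of $x \in \F_q$ satisfying \emph{both} $u - x \in S$ and $w - x \in S$ has size $\frac{q+1}{2} \cdot \frac{1}{2} + O(\sqrt q) < |N(u)|$ for $q \geq 7$ — this is where Theorem~\ref{thm:main2}-type estimates (with $k = 2$, $d_1 = d_2 = 2$, so an error term like $3\sqrt q$) enter, the point being that $u, w$ are distinct non-Galois-conjugate elements of $\F_{q^2} \setminus \F_q$ so the hypotheses apply; strictly, one wants the sharper exact count available in the Peisert setting. The remaining degenerate possibility is $w = u^q$: here I would check directly, using $q \equiv 3 \pmod 4$, that $u - u^q$ is \emph{not} in $S$ (equivalently $u^q$ is not adjacent to $u$), or alternatively that $u^q$ fails to be adjacent to some specific $x \in N(u)$ — this is exactly the place where the Peisert graph behaves differently from the Paley graph (in the Paley case $q \equiv 3 \pmod 4$ forces one to \emph{add} $\alpha^q$, whereas here the quartic structure excludes it), and I expect this to be the most delicate point, hinging on a careful index computation with $g^{(q-1)/\gcd}$ in $\F_{q^2}^*$.

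The main obstacle, then, is the maximality argument against $w \notin \F_q$: I need an inequality strong enough to rule out \emph{all} such $w$ uniformly for every $q \geq 7$, so the crude Weil error term $O(\sqrt q)$ must be sharpened to an exact or near-exact count (as is typical in these EKR-type arguments), and the small cases $q \in \{7, 11, \dots\}$ near the threshold may need to be handled by a direct finite check.
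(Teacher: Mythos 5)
Your overall architecture matches the paper's: show $|N(u)|=\frac{q+1}{2}-1$, rule out extensions by $w\in\F_q$ trivially, bound $|N(u)\cap N(w)|$ by roughly $q/4+O(\sqrt q)<|N(u)|$ for non-conjugate $w\notin\F_q$, handle $w=u^q$ separately, and fall back on a finite check for small $q$ (the paper verifies $7\le q\le 79$ by computer and runs the estimate for $q\ge 83$). However, there are three genuine gaps. First, your claim that $S\cap\F_q^*=(\F_q^*)^2$ is false: since $q\equiv 3\pmod 4$ we have $4\mid(q+1)$, and $\F_q^*=\langle g^{q+1}\rangle$ consists entirely of fourth powers in $\F_{q^2}^*$, so $S\cap\F_q^*=\F_q^*$. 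This is not a harmless slip — it is precisely the fact that all of $\F_q$ is a clique in $P^*_{q^2}$ (equivalently, $GP(q^2,4)$ restricted to $\F_q$ is complete) that makes $N(u)\cup\{u\}$ a clique; under your version, two elements of $N(u)$ differing by a nonsquare of $\F_q$ would be non-adjacent and the statement would be in doubt. Second, your reduction of the intersection count to ``Theorem~\ref{thm:main2} with $k=2$, $d_1=d_2=2$'' does not go through: $S=H\cup gH$ is an index-two \emph{subset} of $\F_{q^2}^*$ but not a subgroup, so membership in $S$ is not a $d$-th power condition for any $d$. One genuinely needs the order-$4$ character: the paper writes $4\cdot\mathbf{1}_S=2+(1+i)\overline\chi+(1-i)\chi$, expands the product of two such indicators into eight character sums, and bounds each via Theorem~\ref{thm:main3}; your parenthetical about a ``short character-sum indicator'' gestures at this but the quoted tool is the wrong one.

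Third, and most seriously, your primary plan for the case $w=u^q$ — showing $u-u^q\notin S$ — fails for half the admissible $q$. Writing $u-u^q=g^j$, one has $(u-u^q)^{q-1}=-1$, which forces $j\equiv\frac{q+1}{2}\pmod{q+1}$ and hence $j\equiv\frac{q+1}{2}\pmod 4$; when $q\equiv 7\pmod 8$ this gives $j\equiv 0\pmod 4$, so $u-u^q\in H\subseteq S$ and $u^q$ \emph{is} adjacent to $u$. Your fallback (find some $x\in N(u)$ not adjacent to $u^q$) is the correct route but is not carried out. The paper's argument here is: for $x\in\F_q$, $u-x\in H\iff u^q-x\in H$ while $u-x\in gH\iff u^q-x\in g^qH=g^3H$, so $N(u)\cap N(u^q)=N_1(u)$, which has size $\frac{q+1}{4}-1<|N(u)|$; hence $u^q$ cannot be adjacent to all of $N(u)$. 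Without this (or an equivalent) computation the $w=u^q$ case is open, and with it the delicate ``index computation'' you flag resolves in the opposite direction from what you expected for $q\equiv 7\pmod 8$. Finally, note that the exact value $|N(u)|=\frac{q-1}{2}$, which you assert can be ``pinned down directly,'' is obtained in the paper from the strong regularity of $P^*_{q^2}$ together with the Hoffman bound; some such argument must be supplied, since the generic character-sum estimate only gives $\frac{q}{2}+O(\sqrt q)$, which is not sharp enough to run the final comparison at small $q$.
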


\medskip

The paper is organized as follows.
In Section~\ref{sec2}, we prove Theorem~\ref{thm:main}. In Section~\ref{sec3}, we establish some estimates on character sums over subfields and prove Theorem~\ref{thm:main2}. In Section~\ref{sec4}, we construct new maximal cliques in generalized Paley graphs and prove Theorem~\ref{thm:newmaxclique}. In Section~\ref{sec5}, we study the maximality of cliques obtained from the $(\F_q,\alpha)$-construction and prove Theorem~\ref{thm:GP} as well as Theorem~\ref{thm:P*}. We end the paper with some remarks and open questions in Section~\ref{sec6}.

\section{Proof of Theorem~\ref{thm:main}}\label{sec2}

Consider the norm map $N_{\F_{q^n}/\F_q}\colon \F_{q^n} \to \F_q$ of the field extension $\F_{q^n}/\F_q$; explicitly, 
$$N_{\F_{q^n}/\F_q}(x)=\prod_{j=0}^{n-1} x^{q^j}=x^{\frac{q^n-1}{q-1}}.$$
The next lemma provides a criterion to determine whether an element $x$ in $\F_{q^n}$ is a $d$-th power based on its norm $N_{\F_{q^n}/\F_q}(x)$. To discuss $d$-th powers in the base field $\F_q$, we need to further assume $d \mid (q-1)$.

\begin{lem}\label{lem: norm_reduction}
Assume $d \mid (q-1)$. Let $x \in \F_{q^n}$. Then $x$ is a $d$-th power in $\F_{q^n}$ if and only if $N_{\F_{q^n}/\F_q}(x)$ is a $d$-th power in $\F_q$.
\end{lem}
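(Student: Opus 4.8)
The plan is to reduce the statement to the familiar cyclic-group criterion for $d$-th powers and then to observe that the norm map is, up to exponentiation, compatible with this criterion. First I would dispose of the trivial case $x=0$: here $N_{\F_{q^n}/\F_q}(0)=0$, and under any fixed convention $0$ is a $d$-th power in $\F_{q^n}$ precisely when it is one in $\F_q$. So from now on assume $x \in \F_{q^n}^*$.

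Next I would recall that $\F_{q^n}^*$ is cyclic of order $q^n-1$. Since $d \mid (q-1)$ and $(q-1) \mid (q^n-1)$, we have $d \mid (q^n-1)$, so an element $y \in \F_{q^n}^*$ is a $d$-th power in $\F_{q^n}$ if and only if $y^{(q^n-1)/d}=1$. Likewise, because $d \mid (q-1)$, an element $z \in \F_q^*$ is a $d$-th power in $\F_q$ if and only if $z^{(q-1)/d}=1$.

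The key step is then a one-line exponent manipulation using the explicit formula $N_{\F_{q^n}/\F_q}(x)=x^{(q^n-1)/(q-1)}$ recorded above:
$$
N_{\F_{q^n}/\F_q}(x)^{(q-1)/d}
= x^{\frac{q^n-1}{q-1}\cdot\frac{q-1}{d}}
= x^{(q^n-1)/d}.
$$
Hence $x^{(q^n-1)/d}=1$ if and only if $N_{\F_{q^n}/\F_q}(x)^{(q-1)/d}=1$, and combining this with the two criteria from the previous paragraph yields the claim.

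I do not expect a genuine obstacle here: once the cyclic-group criterion is in place, the argument is essentially immediate. The only points requiring a little care are the convention for whether $0$ counts as a $d$-th power (harmless, since it behaves the same on both sides) and the verification that $d \mid (q^n-1)$, which is exactly what licenses the equivalence ``$d$-th power $\iff$ $(q^n-1)/d$-th power equals $1$'' and is guaranteed by the hypothesis $d \mid (q-1)$.
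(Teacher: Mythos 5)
Your proof is correct and takes essentially the same route as the paper's: both rest on the cyclicity of the multiplicative groups and the explicit formula $N_{\F_{q^n}/\F_q}(x)=x^{(q^n-1)/(q-1)}$. The paper phrases the $d$-th power condition via a primitive root (writing $x=g^k$ and checking $d\mid k$ on both sides), while you phrase it via the equivalent power-residue criterion $y^{(q^n-1)/d}=1$; the two formulations are interchangeable and your exponent computation is sound.
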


\begin{proof}
The case $x=0$ is trivial. Next, assume $x \neq 0$. Let $g$ be a primitive root of $\F_{q^d}$.  Let $x=g^k$. Then $x$ is a $d$-th power in $\F_{q^n}$ if and only if $d \mid k$. Note that $N_{\F_{q^n}/\F_q}(g)=g^{\frac{q^n-1}{q-1}}$ is a primitive root of $\F_q$. Thus, $N_{\F_{q^n}/\F_q}(x)=(g^{\frac{q^n-1}{q-1}})^k$ is a $d$-th power in $\F_{q}$ if and only if $d \mid k$, as claimed.
\end{proof}

To prove Theorem~\ref{thm:main}, we also need the celebrated Weil bound; see for example \cite[Theorem~5.41]{LN97}.

\begin{lem}[Weil's bound]\label{lem:Weil}
Let $\chi$ be a multiplicative character of $\F_q$ of order $d>1$, and let $f \in \F_q[z]$ be a monic polynomial of positive degree that is not a $d$-th power of a polynomial. 
Let~$m$ be the number of distinct roots of~$f$ in its
splitting field over~$\F_q$. Then for any $a \in \F_q$,
$$\bigg |\sum_{z\in\mathbb{F}_q}\chi\big(af(z)\big) \bigg|\le(m-1)\sqrt q \,.$$
\end{lem}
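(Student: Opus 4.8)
The plan is to read the character sum off from the zeta function of the Kummer cover $C\colon y^d=af(z)$ and then invoke the Riemann Hypothesis for curves over finite fields. For each $s\ge1$ let $\chi_s=\chi\circ N_{\F_{q^s}/\F_q}$ be the lift of $\chi$ to $\F_{q^s}$ and set $S_s=\sum_{z\in\F_{q^s}}\chi_s(af(z))$, so that the quantity to be bounded is $S_1$. I would package the $S_s$ into the $L$-function
$$
L(T)=\exp\!\left(\sum_{s=1}^{\infty}S_s\,\frac{T^s}{s}\right),
$$
and work toward showing that $L(T)=\prod_{i=1}^{m-1}(1-\omega_i T)$ is a polynomial whose reciprocal roots all satisfy $|\omega_i|=\sqrt q$. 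Comparing the coefficient of $T$ on both sides of $\log L(T)=\sum_s S_s T^s/s=-\sum_i\sum_s\omega_i^s T^s/s$ then yields $S_1=-\sum_i\omega_i$, whence $|S_1|\le\sum_i|\omega_i|\le(m-1)\sqrt q$ by the triangle inequality. Here the hypothesis that $f$ is not a $d$-th power of a polynomial is exactly what I need at the outset: if $f=g^d$ then $\chi(af(z))=\chi(a)$ is essentially constant and $S_1\approx\chi(a)q$ has size $q$, so this hypothesis is what prevents a ``trivial'' factor $(1-qT)$ from appearing in $L$ and destroying the bound.

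First I would establish that $L(T)$ is a polynomial of degree at most $m-1$. The clean way is to relate $L$ to the zeta functions of the curve and the line: one has $Z_C(T)/Z_{\mathbb{P}^1}(T)=\prod_{j=1}^{d-1}L_j(T)$, where $L_j$ is the $L$-function attached to the character $\chi^j$ along the cover, so that rationality of the zeta functions forces each $L_j$, and in particular $L=L_1$, to be a polynomial. The degree bound $m-1$ comes from the ramification of the Kummer cover: the map $z\mapsto af(z)$ is ramified precisely at the $m$ distinct roots of $f$ (and is controlled at infinity), and the Riemann--Roch / Euler-characteristic count of these local contributions gives $\deg L\le m-1$. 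This step is essentially bookkeeping once the framework is in place.

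The heart of the matter, and the main obstacle, is the estimate $|\omega_i|=\sqrt q$ for every reciprocal root, i.e.\ the Riemann Hypothesis for the curve $C$. I see two routes. The first is simply to cite Weil's theorem, whose proof proceeds via correspondences on $C\times C$ and the positivity supplied by the Castelnuovo--Severi (Hodge index) inequality; alternatively one may phrase everything in terms of $\ell$-adic cohomology and the eigenvalues of Frobenius acting on $H^1$. The second, more self-contained route is the elementary Stepanov--Bombieri method: one constructs an auxiliary polynomial vanishing to high order at the $\F_{q^s}$-points of $C$ to obtain a one-sided count $N_s\le q^s+c\,q^{s/2}$ for the number of points over $\F_{q^s}$, and then combines these counts across a range of $s$ with the functional equation of $Z_C$ to bootstrap the merely linear savings into the sharp square-root bound on each $|\omega_i|$. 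I expect the delicate part to be the design of the auxiliary polynomial in the Stepanov step --- balancing its degree against the forced order of vanishing so that the point count emerges with the correct exponent $1/2$ --- whereas, if one is content to cite the Riemann Hypothesis, the remainder of the argument is the routine $L$-function bookkeeping sketched above.
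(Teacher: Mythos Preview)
The paper does not prove this lemma at all: it is stated as the classical Weil bound and attributed to \cite[Theorem~5.41]{LN97} without further argument. So there is no ``paper's own proof'' to compare against; the authors simply import the result as a black box.

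Your sketch is the standard route to Weil's bound and is correct in outline. A couple of small imprecisions are worth flagging. First, you write $L(T)=\prod_{i=1}^{m-1}(1-\omega_i T)$ as though the degree is always exactly $m-1$; in fact the degree is \emph{at most} $m-1$ (it drops by one when $d\mid\deg f$, reflecting the behaviour at the point at infinity on the Kummer cover), but of course ``at most'' is all you need for the inequality. Second, you assert $|\omega_i|=\sqrt q$; purity does give equality in the smooth geometrically irreducible case, but for the bound you only need $|\omega_i|\le\sqrt q$, which is what the Riemann Hypothesis for curves (or the Stepanov--Bombieri argument you mention) actually delivers in general. Finally, the constant $a$ plays no real role: if $a=0$ the sum vanishes, and if $a\ne0$ then $\chi(af(z))=\chi(a)\chi(f(z))$ and $|\chi(a)|=1$, so one may take $a=1$ from the start. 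With those cosmetic adjustments your plan goes through, and either of the two routes you name (cite Weil/Deligne, or run Stepanov--Bombieri) is acceptable for filling in the hard step.
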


Now we are ready to present the proof of Theorem~\ref{thm:main}.

\begin{proof}[Proof of Theorem~\ref{thm:main}]
Let $g_i(z)$ be the minimal polynomial of $v_i$ over $\F_q$; then $g_i(z)$ has degree $d_i$ and 
$$
g_i(z)=\prod_{j=0}^{d_i-1} (z-v_i^{q^j}).
$$ 
Let $f_i=g_i^{n/d_i}$. Then $f_i$ is of degree $n$, and each root of $f_i$ is a Galois conjugate of $v_i$ with multiplicity $n/d_i$. 

Note that for each $z \in \F_q$, since $v_i^{q^{d_i}}=v_i$, we have
$$
f_i(z)=\prod_{j=0}^{d_i-1} (z-v_i^{q^j})^{n/d_i}=\prod_{j=0}^{n-1} (z-v_i^{q^j})=\prod_{j=0}^{n-1} (z-v_i)^{q^j}=N_{\F_{q^n}/\F_q}(z-v_i).
$$

Let $\chi$ be a multiplicative character of $\F_q$, with order $d$. If $x \in \F_q$, then $(x-v_i)^{(q^n-1)/d}=1$ for each $i$ if and only if $x-v_i$ is a $d$-th power in $\F_{q^n}$ for each~$i$, if and only if $f_i(x)=N_{\F_{q^n}/\F_q}(x-v_i)$ is a $d$-th power in $\F_q$ for each~$i$ (by Lemma~\ref{lem: norm_reduction}), if and only if $\chi(f_i(x))=1$ for each~$i$.

By the orthogonality relations, $\frac{1}{d} \sum_{j=0}^{d-1} \chi^j$ is the indicator function of $d$-th powers in $\F_q^*$. Therefore, by the above discussion, the number of solutions to the given system of equations is
\begin{equation} \label{eq: common}
 M=\sum_{x \in \F_q} \prod_{i=1}^k \bigg(\frac{1}{d} \sum_{j=0}^{d-1} \chi^j(f_i(x)) \bigg)=\frac{1}{d^k} \sum_{0 \leq j_1, j_2, \ldots, j_k \leq d-1}\sum_{x \in \F_q} \chi\bigg(\prod_{i=1}^k f_i^{j_i}(x)\bigg).   
\end{equation}

Since $v_i$ and $v_j$ are not Galois conjugates whenever $i \neq j$, it follows that $f_1, f_2, \ldots, f_k$ are pairwise coprime. It follows that $\prod_{i=1}^k f_i^{j_i}(z)$ is a $d$-th power of a polynomial if and only if $d \mid j_i \cdot n/d_i$ for each $i$, or equivalently,
$$
\frac{dd_i}{\gcd(dd_i,n)} \mid j_i.
$$
Thus, the number of $k$-tuples $(j_1,j_2, \ldots, j_k)$ such that $0 \leq j_i \leq d-1$ and $\prod_{i=1}^k f_i^{j_i}(z)$ is a $d$-th power of a polynomial is 
$$
\prod_{i=1}^k  \frac{d\gcd(dd_i,n)}{dd_i} =\prod_{i=1}^k  \frac{\gcd(dd_i,n)}{d_i} ,
$$
and the contribution of these $k$-tuples (corresponding to trivial character sums) to the sum~\eqref{eq: common} is
$$
\frac{q}{d^k} \cdot \prod_{i=1}^k  \frac{\gcd(dd_i,n)}{d_i}= q \cdot \prod_{i=1}^k  \frac{\gcd(dd_i,n)}{dd_i}.
$$
When $\prod_{i=1}^k f_i^{j_i}(z)$ is not a $d$-th power of a polynomial, the  number of distinct roots of the polynomial $\prod_{i=1}^{k} f_i(z)$ in its splitting field is at most $\sum_{i=1}^k d_i$, and thus Weil's bound (Lemma~\ref{lem:Weil}) implies that 
$$
\bigg|\sum_{x \in \F_q} \chi\bigg(\prod_{i=1}^k f_i^{j_i}(x)\bigg)\bigg| \leq \bigg(\sum_{i=1}^k d_i {}-1\bigg)\sqrt{q}.
$$
We conclude that
$$
\bigg|M-q \cdot \prod_{i=1}^k  \frac{\gcd(dd_i,n)}{dd_i}\bigg| \leq \frac{1}{d^k} \cdot d^k \cdot \bigg(\sum_{i=1}^k d_i {}-1\bigg)\sqrt{q}=\bigg(\sum_{i=1}^k d_i {}-1\bigg)\sqrt{q}
$$
as required.
\end{proof}

\section{Proof of Theorem~\ref{thm:main2}}\label{sec3}

Similar to the discussions in the previous section, the proof of Theorem~\ref{thm:main2} will be based on character sums. Clearly, we need to estimate character sums of the form $\sum_{x \in \F_q} \chi(f(x))$, where $\chi$ is a multiplicative character of $\F_{q^2}$. The key idea is to convert the desired character sum over finite fields to an equivalent character sum over function fields. 

Let $\F_q[T]$ be the polynomial ring in variable $T$ over $\F_q$. Let $f \in \F_q[T]$ be a non-constant polynomial. A \emph{Dirichlet character modulo $f$}, usually denoted by $\chi_f$, is a character on the multiplicative group $(\F_q[T]/f\F_q[T])^*$, which can be extended to a function on $\F_q[T]$ by setting $\chi_f(g)=\chi_f(g \bmod f)$ if $f$ and $g$ are coprime, and $\chi_f(g)=0$ otherwise. We refer to \cite[Section 4]{R02} for more background.

We need the following Weil bound on character sums over monic linear polynomials; see for example \cite[Theorem 2.1]{W97}.

\begin{lem}\label{lem:FF}
Let $f \in \F_q[T]$ be a polynomial with degree $n \geq 1$, and let $\chi_f$ be a non-trivial Dirichlet character modulo $f$. Then
$$
\bigg|\sum_{a \in \F_q} \chi_{f}(T-a)\bigg|\leq (n-1)\sqrt{q}.
$$
\end{lem}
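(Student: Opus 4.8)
The plan is to translate the character sum over monic linear polynomials into information about the Dirichlet $L$-function attached to $\chi_f$, and then to invoke the Riemann Hypothesis for function fields --- the circle of ideas that also underlies Lemma~\ref{lem:Weil}.

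I would begin by introducing the $L$-function
$$
\mathcal{L}(u,\chi_f)=\sum_{\substack{g\in\F_q[T]\\ g\ \mathrm{monic}}}\chi_f(g)\,u^{\deg g}=\sum_{m\ge 0}c_m\,u^m,\qquad c_m:=\sum_{\substack{\deg g=m\\ g\ \mathrm{monic}}}\chi_f(g),
$$
viewed at first as a formal power series in $u$. Since the monic polynomials of degree $1$ are precisely the $T-a$ with $a\in\F_q$, the coefficient $c_1$ equals the sum $\sum_{a\in\F_q}\chi_f(T-a)$ we wish to estimate. The next step is to show that $\mathcal{L}(u,\chi_f)$ is a polynomial in $u$ of degree at most $n-1$: for $m\ge n=\deg f$, the reduction map from monic polynomials of degree $m$ to $\F_q[T]/f\F_q[T]$ is $q^{m-n}$-to-one and onto (fix the image of $T^m$; the remaining coefficients form an $m$-dimensional $\F_q$-vector space that maps $\F_q$-linearly and surjectively onto the $n$-dimensional quotient), whence
$$
c_m=q^{m-n}\sum_{c\,\in\,\F_q[T]/f\F_q[T]}\chi_f(c)=0,
$$
the last equality holding because $\chi_f$ is a non-trivial character of $\bigl(\F_q[T]/f\F_q[T]\bigr)^*$ extended by zero off the units, and hence sums to zero over the full residue system by orthogonality.

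With this in hand, the main input is Weil's Riemann Hypothesis for function-field $L$-functions, which for a non-trivial Dirichlet character yields a factorization $\mathcal{L}(u,\chi_f)=\prod_{i=1}^{r}(1-\omega_i u)$ with $|\omega_i|\le\sqrt q$ for every $i$, where $r\le n-1$ by the degree bound just established. Comparing the coefficients of $u$ on both sides gives $c_1=-\sum_{i=1}^{r}\omega_i$, so that
$$
\bigg|\sum_{a\in\F_q}\chi_f(T-a)\bigg|=|c_1|\le r\sqrt q\le(n-1)\sqrt q,
$$
which is the asserted bound.

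The factorization is where the substance lies, and I would obtain it by reducing to the primitive case: writing $\chi_f$ as the character induced by the primitive character $\chi^*$ of conductor $f^*\mid f$, one has $\mathcal{L}(u,\chi_f)=\mathcal{L}(u,\chi^*)\prod_{P\mid f,\,P\nmid f^*}\bigl(1-\chi^*(P)u^{\deg P}\bigr)$, the finitely many auxiliary Euler factors contributing inverse roots of absolute value $1$, while Weil's theorem (see \cite{R02}) asserts that $\mathcal{L}(u,\chi^*)$ is a polynomial all of whose inverse roots have absolute value exactly $\sqrt q$. This Riemann Hypothesis for primitive Dirichlet $L$-functions over $\F_q(T)$ --- the function-field analogue and indeed the source of the classical Weil bound quoted as Lemma~\ref{lem:Weil} --- is the one genuinely deep ingredient; the remainder is bookkeeping with power series. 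The only point that demands care is the passage through imprimitive characters, since when $f$ is not squarefree $\mathcal{L}(u,\chi_f)$ may have degree strictly less than $n-1$ and one must honestly keep track of the Euler factors at primes dividing $f$ but not $f^*$; fortunately this can only decrease $r$, and so it never worsens the final estimate.
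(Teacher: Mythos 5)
Your proof is correct. The paper does not prove Lemma~\ref{lem:FF} itself but simply cites \cite[Theorem~2.1]{W97}, and your argument---identifying the sum as the degree-one coefficient of the Dirichlet $L$-function $\mathcal{L}(u,\chi_f)$, showing that this $L$-function is a polynomial of degree at most $n-1$ by orthogonality over residue classes, and bounding its inverse roots by $\sqrt q$ via Weil's Riemann Hypothesis for function fields (with the passage through imprimitive characters handled correctly)---is essentially the standard proof given in that reference and in \cite[Chapter~4]{R02}.
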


From this lemma, we can deduce the following theorem.

\begin{thm}\label{thm:main3}
Let $f_1, f_2,\ldots, f_k$ be monic irreducible polynomials in $\F_{q^n}[T]$, no two of which are Galois conjugates over $\F_q$.  Let $\chi_1,\chi_2,\ldots, \chi_k$ be multiplicative character of $\F_{q^n}$. Assume that there exists $1\leq i \leq k$ such that~$f_i$ has a root~$\xi_i$ such that $\chi_i$ is not identically $1$ on the set $N_{\F_{q^n}[\xi_i]/\F_{q^n}}(\F_q[\xi_i]) \setminus \{0\}$. Let $b_i$ be the degree of $f_i(T)$ and $c_i$ be the number of conjugates of $f_i(T)$ over $\F_q$. Then 
$$\bigg |\sum_{a\in\mathbb{F}_q}\prod_{i=1}^k\chi_i\big(f_i(a)\big) \bigg|\le\bigg(\sum_{i=1}^{k} b_ic_i {}-1\bigg)\sqrt q \,.$$   
In particular, if $m = \sum_{i=1}^{k} b_i$ is the sum of the degree of the~$f_i$, then 
$$\bigg |\sum_{a\in\mathbb{F}_q}\prod_{i=1}^k\chi_i\big(f_i(a)\big) \bigg|\le(mn-1)\sqrt q \,.$$    
\end{thm}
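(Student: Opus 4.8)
The plan is to translate the character sum over~$\F_q$ into a character sum over the rational function field~$\F_q(T)$ and then apply Lemma~\ref{lem:FF}. Fix a root~$\xi_i$ of~$f_i$ in an algebraic closure of~$\F_q$; note that the fields $\F_q[\xi_i]=\F_q(\xi_i)$ and $\F_{q^n}[\xi_i]=\F_{q^n}(\xi_i)$ do not depend on this choice, since the roots of~$f_i$ form a single Frobenius orbit. As~$f_i$ is monic irreducible of degree~$b_i$ over~$\F_{q^n}$, its roots are $\xi_i,\xi_i^{q^n},\dots,\xi_i^{q^{n(b_i-1)}}$, so for $a\in\F_q$ we have
$$
f_i(a)=\prod_{j=0}^{b_i-1}\big(a-\xi_i^{q^{nj}}\big)=N_{\F_{q^n}[\xi_i]/\F_{q^n}}(a-\xi_i),
$$
using $a^{q^n}=a$. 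Hence $\chi_i(f_i(a))=\psi_i(a-\xi_i)$, where $\psi_i:=\chi_i\circ N_{\F_{q^n}[\xi_i]/\F_{q^n}}$ is a multiplicative character of $\F_{q^n}[\xi_i]^*$. Since $a-\xi_i$ lies in the subfield $\F_q[\xi_i]$, only the restriction $\psi_i|_{\F_q[\xi_i]^*}$ is relevant; letting $m_i\in\F_q[T]$ be the minimal polynomial of~$\xi_i$ over~$\F_q$, the isomorphism $\F_q[T]/(m_i)\cong\F_q[\xi_i]$ sending $T\mapsto\xi_i$ carries $\psi_i|_{\F_q[\xi_i]^*}$ to a Dirichlet character $\chi_{m_i}$ modulo~$m_i$ and carries the class of $a-T$ to $a-\xi_i$. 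Consequently
$$
\sum_{a\in\F_q}\prod_{i=1}^k\chi_i\big(f_i(a)\big)=\eta\sum_{a\in\F_q}\Psi(T-a),\qquad \Psi:=\prod_{i=1}^k\chi_{m_i},
$$
where $\eta:=\prod_{i=1}^k\chi_{m_i}(-1)$ has modulus~$1$. The theorem will then follow from Lemma~\ref{lem:FF}, provided I pin down the degree of the modulus of~$\Psi$ and show that~$\Psi$ is nontrivial.

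For the degree count I would show $m_i=\prod_{l=0}^{c_i-1}f_i^{(l)}$, the product of the~$c_i$ distinct Galois conjugates of~$f_i$ over~$\F_q$: we have $f_i\mid m_i$ in $\F_{q^n}[T]$ since $m_i$ vanishes at~$\xi_i$ and~$f_i$ is the minimal polynomial of~$\xi_i$ over the larger field~$\F_{q^n}$; applying the Frobenius of $\F_{q^n}/\F_q$ to this relation shows each conjugate~$f_i^{(l)}$ divides~$m_i$; the~$f_i^{(l)}$ are pairwise coprime so their product divides~$m_i$, and this product is Galois invariant hence lies in~$\F_q[T]$, so it equals~$m_i$ by irreducibility of~$m_i$. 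In particular $\deg m_i=b_ic_i$, i.e.\ $[\F_q[\xi_i]:\F_q]=b_ic_i$. Since no two of the~$f_i$ are Galois conjugates, no two of the~$\xi_i$ are conjugate over~$\F_q$, so the~$m_i$ are pairwise distinct monic irreducibles; hence $M:=\prod_{i=1}^k m_i$ is squarefree of degree $\sum_{i=1}^k b_ic_i$, and~$\Psi$ is a genuine Dirichlet character modulo~$M$. Granting that~$\Psi$ is nontrivial, Lemma~\ref{lem:FF} gives $\big|\sum_{a\in\F_q}\Psi(T-a)\big|\le\big(\sum_{i=1}^k b_ic_i-1\big)\sqrt q$, and multiplying by $|\eta|=1$ yields the first bound; the second follows because each~$c_i$, being the size of a $\operatorname{Gal}(\F_{q^n}/\F_q)$-orbit, divides~$n$, whence $\sum_i b_ic_i\le n\sum_i b_i=mn$.

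The crux---and the step I expect to require the most care---is checking that the hypothesis of the theorem is precisely the statement that~$\Psi$ is nontrivial. Since the~$m_i$ are pairwise coprime,~$\Psi$ is trivial if and only if every~$\chi_{m_i}$ is trivial, equivalently every~$\psi_i$ is trivial on~$\F_q[\xi_i]^*$. From $\F_q[\xi_i]\cdot\F_{q^n}=\F_{q^n}[\xi_i]$ together with $[\F_q[\xi_i]:\F_q]=b_ic_i$ and $[\F_{q^n}[\xi_i]:\F_{q^n}]=b_i$, the compositum and intersection formulas for subfields of a finite field give $\F_q[\xi_i]\cap\F_{q^n}=\F_{q^{c_i}}$; hence the restriction of $N_{\F_{q^n}[\xi_i]/\F_{q^n}}$ to~$\F_q[\xi_i]$ equals the norm $N_{\F_q[\xi_i]/\F_{q^{c_i}}}$, whose image on~$\F_q[\xi_i]^*$ is all of~$\F_{q^{c_i}}^*$. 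Therefore~$\psi_i$ is trivial on~$\F_q[\xi_i]^*$ exactly when~$\chi_i$ is trivial on $\F_{q^{c_i}}^*=N_{\F_{q^n}[\xi_i]/\F_{q^n}}(\F_q[\xi_i])\setminus\{0\}$, so the assumed condition---that some~$\chi_i$ fails to be identically~$1$ on this set---is exactly the nontriviality of~$\Psi$, and the proof is complete. The delicate bookkeeping throughout is keeping track of the two field towers $\F_q\subseteq\F_q[\xi_i]\subseteq\F_{q^n}[\xi_i]$ and $\F_q\subseteq\F_{q^n}\subseteq\F_{q^n}[\xi_i]$ and the compatibility of their norm maps; once that dictionary is set up, Lemma~\ref{lem:FF} furnishes all of the analytic input.
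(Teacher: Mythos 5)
Your proof is correct and follows essentially the same route as the paper's: both convert the sum into a Dirichlet character sum modulo the product of the $\F_q$-minimal polynomials $m_i=F_i$ (of degree $b_ic_i$ each), with the character on $(\F_q[T]/m_i)^*$ defined by composing $\chi_i$ with the norm $N_{\F_{q^n}[\xi_i]/\F_{q^n}}$, and then invoke Lemma~\ref{lem:FF}. Your extra step identifying $N_{\F_{q^n}[\xi_i]/\F_{q^n}}(\F_q[\xi_i])\setminus\{0\}=\F_{q^{c_i}}^*$ is a correct refinement but not needed for the bound, since the hypothesis already states nontriviality of $\chi_i$ on exactly the image of the relevant norm.
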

\begin{proof}
For each $1 \leq i \leq k$, let $\xi_i$ be a root of $f_i$. Since $f_i$ is a monic irreducible polynomial in $\F_{q^n}[T]$, it follows that $f_i$ is the minimal polynomial of $\xi_i$ over $\F_{q^n}$ and thus 
\begin{equation}\label{eq:f_i(a)}
f_i(a)=N_{\F_{q^n}[\xi_i]/\F_{q^n}}(a-\xi_i)    
\end{equation}
for $a \in \F_q$. For each $1 \leq i \leq k$, let $F_i$ be the product of conjugates of $f_i(T)$ over $\F_q$. Then, $F_1, F_2, \ldots, F_k$ are defined and irreducible over $\F_q$. Since no two of $f_1, f_2,\ldots, f_k$ are Galois conjugates over $\F_q$, it follows that $F_1, F_2, \ldots, F_k$ are coprime. Let $F=\prod_{i=1}^k F_i$. 
Since $\deg(F_i)=b_ic_i$, it follows that $\deg(F)=\sum_{i=1}^k b_ic_i$.

For $g \in \F_q[T]$, define 
\begin{equation}\label{eq:chi_{F_i}}
\chi_{F_i}(g)=\chi_i(N_{\F_{q^n}[\xi_i]/\F_{q^n}}(g(\xi_i))).
\end{equation}
Note that $\chi_{F_i}$ is a Dirichlet character modulo $F_i$. Note also that, as $g$ runs over $\F_q[T]$, $g(\xi_i)$ runs over $\F_q[\xi_i]$, and thus the norm $N_{\F_{q^n}[\xi_i]/\F_{q^n}}(g(\xi_i))$ runs over the set $N_{\F_{q^n}[\xi_i]/\F_{q^n}}(\F_q[\xi_i])$. Thus, if $\chi_i$ is not identically $1$ on the set $N_{\F_{q^n}[\xi_i]/\F_{q^n}}(\F_q[\xi_i]) \setminus \{0\}$, then $\chi_{F_i}$ is non-trivial by definition. Therefore, by the given assumption, at least one of the characters $\chi_{F_1}, \chi_{F_2}, \ldots, \chi_{F_k}$ is non-trivial. Since $F_1, F_2, \ldots, F_k$ are irreducible coprime polynomials, by the Chinese remainder theorem, the product $\chi_F:=\prod_{i=1}^k \chi_{F_i}$ is a non-trivial Dirichlet character modulo $F$. Therefore, by Lemma~\ref{lem:FF} and equations~\eqref{eq:f_i(a)} and~\eqref{eq:chi_{F_i}},
$$
\bigg |\sum_{a\in\mathbb{F}_q}\prod_{i=1}^k\chi_i\big(f_i(a)\big) \bigg|=\bigg |\sum_{a\in\mathbb{F}_q}\prod_{i=1}^k\chi_{F_i}(a-T) \bigg|=\bigg |\sum_{a\in\mathbb{F}_q}\chi_{F}(a-T) \bigg|\leq \bigg(\sum_{i=1}^k b_ic_i-1\bigg)\sqrt{q},
$$
as required. Since $c_i \leq n$, we always have $\sum_{i=1}^k b_ic_i \leq n \sum_{i=1}^k b_i=mn$.
\end{proof}

\begin{rem}\label{rem:trivial}
From the above proof, we can infer that the assumption that ``there exists $1\leq i \leq k$ such that~$f_i$ has a root~$\xi_i$ such that $\chi_i$ is not identically $1$ on the set $N_{\F_{q^n}[\xi_i]/\F_{q^n}}(\F_q[\xi_i]) \setminus \{0\}$" in Theorem~\ref{thm:main3} is equivalent to the character sum being non-trivial. In particular, if this condition does not hold, then the character sum in Theorem~\ref{thm:main3} is~$q$. 
\end{rem}

Next, we use Theorem~\ref{thm:main3} to deduce Theorem~\ref{thm:main2}.

\begin{proof}[Proof of Theorem~\ref{thm:main2}]

For each $1 \leq i \leq k$, let $f_i(T)=T-v_i \in \F_{q^2}[T]$. By the assumption, no two of $f_1,\ldots, f_k$ are Galois conjugates of each other. Note that for each $1 \leq i \leq k$, we have $N_{\F_{q^2}[v_i]/\F_{q^2}}(\F_q[v_i])=N_{\F_{q^2}/\F_{q^2}}(\F_{q^2})=\F_{q^2}$ since $v_i \in \F_{q^2} \setminus \F_q$.

Let $\chi$ be a multiplicative character in $\F_{q^2}$, with order $d$. 
Similar to the proof of Theorem~\ref{thm:main}, the number of solutions to the given system of equations is
\begin{equation} \label{eq: common2}
 M=\frac{1}{d^k} \sum_{0 \leq j_1, j_2, \ldots, j_k \leq d-1}\sum_{x \in \F_q} \chi\bigg(\prod_{i=1}^k f_i^{j_i}(x)\bigg).   
\end{equation}  
When $j_1=j_2=\cdots=j_k=0$, the character sum contributes to $\frac{q}{d^k}$ to $M$. In all other cases, at least one of $\chi^{j_1},\chi^{j_2}, \ldots, \chi^{j_k}$ is a nontrivial character since $\chi$ has order $d$, and Theorem~\ref{thm:main3} implies that
$$
\bigg|\sum_{x \in \F_q} \chi\bigg(\prod_{i=1}^k f_i^{j_i}(x)\bigg)\bigg|=\bigg|\sum_{x \in \F_q} \prod_{i=1}^k \chi^{j_i} (f_i(x))\bigg|\leq (2k-1)\sqrt{q}.
$$
Therefore, summing over $k$-tuples $(j_1,j_2, \ldots, j_k)$ in equation~\eqref{eq: common2}, we conclude that
$$
\bigg|M-\frac{q}{d^k}\bigg|\leq (2k-1)\sqrt{q},
$$
as required.
\end{proof}

\begin{rem}
Our proof of Theorem~\ref{thm:main2} in this section employed character sums over finite fields. One can give another proof of Theorem~\ref{thm:main} using Theorem~\ref{thm:main3} and Remark~\ref{rem:trivial}, which employ character sums over function fields. To see that, we again set $f_i(T)=T-v_i \in \F_{q^n}[T]$ for each $1\leq i \leq k$, and equation~\eqref{eq: common2} still applies. Since $v_i$ has degree $d_i$ over $\F_q$, the number of conjugates of $f_i$ over $\F_q$ also is $d_i$. Note that we have $N_{\F_{q^n}[v_i]/\F_{q^n}}(\F_q[v_i]) \setminus \{0\}=\F_{q^{d_i}}^*$, and thus the main term $q \prod_{i=1}^k  \frac{\gcd(dd_i,n)}{dd_i}$ in Theorem~\ref{thm:main} follows from Remark~\ref{rem:trivial}, and the error term $(\sum_{i=1}^k d_i{}-1)\sqrt{q}$ in Theorem~\ref{thm:main} comes from Theorem~\ref{thm:main3}. While the exact machinery of the two proofs are different, there are definite similarities and the two proofs are probably essentially equivalent.
\end{rem}

Next, we state and prove a corrected version of \cite[Corollary 2.4]{W97}. Any monic polynomial $f \in \F_{q^n}[T]$ with positive degree can be factorized as the product of monic irreducible polynomials in $\F_{q^n}[T]$:
$$
f=\prod_{i=1}^{s} f_i^{t_i},
$$
where the~$f_i$ are distinct irreducible polynomials. Let $\xi$ be a root of $f_1$. By relabelling, we may assume $f_1,f_2, \ldots, f_{r}$ are the conjugates of $f_1$ over $\F_q$. Let $\sigma\colon x \mapsto x^q$ be the Frobenius map defined on~$\F_{q^n}$; note that $\sigma$ naturally induces a map on $\F_{q^n}[T]$. Then for each $1 \leq i \leq r$, we can write $f_i=\sigma^{\alpha_i}(f_1)$ for some $0\le\alpha_i\le n-1$. We define the \emph{total multiplicity} of $\xi$ to be
$$
m=\sum_{i=1}^{r} t_iq^{\alpha_i},
$$
which is the sum of weighted multiplicities among conjugates of $\xi$.
Observe that if $\chi$ is a character over $\F_{q^n}$, then we have
$$
\sum_{a \in \F_q} \chi(f(a))=\sum_{a \in \F_q} \prod_{i=1}^s \chi^{t_i}(f_i(a)).
$$
Note that for $1 \leq i \leq r$, and each $a \in \F_q$, we have $f_i(a)=(\sigma^{\alpha_i}(f_1))(a)=(f_1(a))^{q^{\alpha_i}}$. Thus, for $a \in \F_q$, we have
$$
\prod_{i=1}^{r} \chi^{t_i}(f_i(a))=\prod_{i=1}^{r} \chi^{t_iq^{\alpha_i}}(f_1(a))=\chi^m(f_1(a)).
$$
Using this observation, Theorem~\ref{thm:main3} implies the following corollary immediately. 
\begin{cor}
Let $f_1, f_2,\ldots, f_k$ be monic polynomials in $\F_{q^n}[T]$, no two of which share roots that are Galois conjugates over $\F_q$. Let $m$ be the degree of the largest squarefree divisor of $\prod_{i=1}^kf_i$. Let $\chi_1,\chi_2,\ldots, \chi_k$ be multiplicative characters of $\F_{q^n}$. Assume that there exists $1\leq i \leq k$ such that~$f_i$ has a root~$\xi_i$ of total multiplicity~$m_i$ such that $\chi_i^{m_i}$ is not identically~$1$ on the set $N_{\F_{q^n}[\xi_i]/\F_{q^n}}(\F_q[\xi_i]) \setminus \{0\}$. Then
$$\bigg |\sum_{a\in\mathbb{F}_q}\prod_{i=1}^k\chi_i\big(f_i(a)\big) \bigg|\le(mn-1)\sqrt q .$$
\end{cor}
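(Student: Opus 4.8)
The strategy is to peel $\prod_{i=1}^k f_i$ down to its distinct monic irreducible factors over $\F_{q^n}$, group these into $\operatorname{Gal}(\F_{q^n}/\F_q)$-orbits, and apply Theorem~\ref{thm:main3} to one representative from each orbit, with characters adjusted to account for multiplicities exactly as in the observation recorded just before the corollary. First I would write $f_i=\prod_j P_{ij}^{t_{ij}}$ with the $P_{ij}$ distinct monic irreducibles over $\F_{q^n}$, so that $\chi_i\bigl(f_i(a)\bigr)=\prod_j \chi_i^{t_{ij}}\bigl(P_{ij}(a)\bigr)$ for every $a\in\F_q$. The hypothesis that no two of the $f_i$ share roots that are Galois conjugate over $\F_q$ has two consequences: all the irreducible factors $P_{ij}$ (over all $i,j$) are pairwise distinct, so $\prod_{i}f_i$ has largest squarefree divisor $\prod_{i,j}P_{ij}$ of degree $m=\sum_{i,j}\deg P_{ij}$; and each $\operatorname{Gal}(\F_{q^n}/\F_q)$-orbit among the $P_{ij}$ sits inside the factorization of a single $f_i$ (otherwise two distinct $f_i$'s would have roots conjugate over $\F_q$).

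Next, partition the $P_{ij}$ into such orbits $\mathcal C$; let $i(\mathcal C)$ be the unique index whose factorization contains $\mathcal C$, and pick a representative $P_{\mathcal C}$ together with a root $\xi_{\mathcal C}$ of it, writing $\mu_{\mathcal C}$ for the total multiplicity of $\xi_{\mathcal C}$ in the sense defined immediately before the corollary. Applying the observation preceding the corollary to $f_{i(\mathcal C)}$ with the character $\chi_{i(\mathcal C)}$, orbit by orbit, gives $\prod_{P_{ij}\in\mathcal C}\chi_{i(\mathcal C)}^{t_{ij}}\bigl(P_{ij}(a)\bigr)=\chi_{i(\mathcal C)}^{\mu_{\mathcal C}}\bigl(P_{\mathcal C}(a)\bigr)$ for all $a\in\F_q$, and hence $\prod_{i=1}^k\chi_i\bigl(f_i(a)\bigr)=\prod_{\mathcal C}\chi_{i(\mathcal C)}^{\mu_{\mathcal C}}\bigl(P_{\mathcal C}(a)\bigr)$. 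The $P_{\mathcal C}$ are monic irreducible over $\F_{q^n}$, no two Galois conjugate over $\F_q$, and the nontriviality hypothesis transfers: the root $\xi_i$ of total multiplicity $m_i$ furnished by the hypothesis is a root of exactly one of the irreducible factors, and choosing $\xi_{\mathcal C}=\xi_i$ for that orbit makes the attached character $\chi_{i(\mathcal C)}^{\mu_{\mathcal C}}$ equal to $\chi_i^{m_i}$, which is assumed not identically $1$ on $N_{\F_{q^n}[\xi_i]/\F_{q^n}}(\F_q[\xi_i])\setminus\{0\}$.

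Finally, I would apply Theorem~\ref{thm:main3} to the family $\{P_{\mathcal C}\}$ equipped with the characters $\{\chi_{i(\mathcal C)}^{\mu_{\mathcal C}}\}$. In the notation of that theorem, $\sum_{\mathcal C}(\deg P_{\mathcal C})\cdot(\#\,\operatorname{Gal}(\F_{q^n}/\F_q)\text{-conjugates of }P_{\mathcal C})=\sum_{\mathcal C}\sum_{P_{ij}\in\mathcal C}\deg P_{ij}=m$, so Theorem~\ref{thm:main3} yields $\bigl|\sum_{a\in\F_q}\prod_i\chi_i(f_i(a))\bigr|\le(m-1)\sqrt q\le(mn-1)\sqrt q$, which is in fact slightly sharper than the claimed bound. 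There is no genuine obstacle here beyond bookkeeping — keeping the orbit grouping, the total multiplicities, and the nontriviality hypothesis in sync — and the single point that warrants a moment's care is the convention $\chi(0)=0$, which must be used consistently so that the orbit-by-orbit identity remains valid at those $a\in\F_q$ that happen to be roots of some $P_{\mathcal C}$.
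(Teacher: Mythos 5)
Your reduction follows exactly the route the paper intends: factor each $f_i$ into irreducibles over $\F_{q^n}$, use the hypothesis to see that all these irreducible factors are pairwise distinct and that each Galois orbit over $\F_q$ lies inside a single $f_i$, collapse each orbit onto one representative via the total-multiplicity identity $\prod_i \chi^{t_iq^{\alpha_i}}(f_1(a))=\chi^{m}(f_1(a))$ recorded before the corollary, check that the nontriviality hypothesis transfers to the chosen representative, and then invoke Theorem~\ref{thm:main3}. All of that bookkeeping is sound, including your remark about the convention $\chi(0)=0$.

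The one genuine error is in your final count. In Theorem~\ref{thm:main3}, $c_i$ is the number of \emph{all} Galois conjugates of the irreducible polynomial $f_i$ over $\F_q$ --- an intrinsic quantity --- not the number of members of that orbit that happen to divide $\prod_i f_i$. Your identity $\sum_{\mathcal C}(\deg P_{\mathcal C})\cdot(\#\text{conjugates of }P_{\mathcal C})=\sum_{\mathcal C}\sum_{P_{ij}\in\mathcal C}\deg P_{ij}=m$ therefore fails whenever some conjugate of a factor does not itself divide $\prod_i f_i$, and the ``sharper'' bound $(m-1)\sqrt q$ you deduce from it is false. For a concrete counterexample take $k=1$, $n=2$, $f_1=T-v$ with $v\in\F_{q^2}\setminus\F_q$, and $\chi_1$ nontrivial: here $m=1$, so your bound would force $\sum_{a\in\F_q}\chi_1(a-v)=0$ identically, which cannot hold (by orthogonality over all multiplicative characters this would give a non-integral point count; equivalently, it would make the error term in the $k=1$ case of Theorem~\ref{thm:main2} vanish). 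In this example the single orbit has $b_{\mathcal C}c_{\mathcal C}=1\cdot 2=2>m$, and the true bound is $(2-1)\sqrt q=(mn-1)\sqrt q$. The stated corollary is rescued by the ``in particular'' clause of Theorem~\ref{thm:main3}: since each $c_{\mathcal C}\le n$ and $\sum_{\mathcal C}\deg P_{\mathcal C}\le m$, one has $\sum_{\mathcal C}b_{\mathcal C}c_{\mathcal C}\le mn$, giving $(mn-1)\sqrt q$. So your argument needs only this one-line repair, but as written the last step derives the conclusion from a false intermediate inequality.
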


\section{New constructions of maximal cliques in generalized Paley graphs}\label{sec4}

To construct maximal cliques in $GP(q^d,d)$, we are led to consider the local behavior of the graph $GP(q^d,d)$ in a subfield, equivalently, the structure of the subgraph induced by a subfield.

\begin{lem}\label{lem:subgraph}
Let $d \geq 2$ and let $q \equiv 1 \pmod d$ be an odd prime power. Let $d'$ be a divisor of $d$ that is greater than $1$. Then the subgraph of $GP(q^d,d)$ induced by the subfield $\F_{q^{d'}}$ is the same as $GP(q^{d'},d')$.
\end{lem}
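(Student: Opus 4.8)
The plan is to compare the two graphs vertex-by-vertex and edge-by-edge. Both $GP(q^d,d)$ restricted to $\F_{q^{d'}}$ and $GP(q^{d'},d')$ have the same vertex set, namely $\F_{q^{d'}}$, so it suffices to show that two distinct vertices $x,y \in \F_{q^{d'}}$ are adjacent in one graph if and only if they are adjacent in the other. By the Cayley-graph structure, this amounts to showing that for $z = x - y \in \F_{q^{d'}}^*$, we have $z \in (\F_{q^d}^*)^d$ if and only if $z \in (\F_{q^{d'}}^*)^{d'}$. So the entire lemma reduces to a statement about which elements of the subfield $\F_{q^{d'}}$ are $d$-th powers in the big field $\F_{q^d}$.

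First I would record the relevant divisibility facts. Since $q \equiv 1 \pmod d$ and $d' \mid d$, we have $d \mid q^{d'}-1$ as well (indeed $d \mid q-1 \mid q^{d'}-1$), so $GP(q^{d'},d')$ is a legitimate $d'$-Paley graph and $d$-th powers make sense in $\F_{q^{d'}}$ too. The key index computation: for $z \in \F_{q^{d'}}^*$, $z$ is a $d$-th power in $\F_{q^d}^*$ iff $z^{(q^d-1)/d} = 1$. Then I would use the norm-descent idea already present in the paper (Lemma \ref{lem: norm_reduction}), or a direct primitive-root argument: let $g$ be a primitive root of $\F_{q^d}$, so $h := g^{(q^d-1)/(q^{d'}-1)}$ is a primitive root of $\F_{q^{d'}}$; write $z = h^k$. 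Then $z$ is a $d$-th power in $\F_{q^d}^*$ iff $g^{k(q^d-1)/(q^{d'}-1)}$ is, iff $d \mid k \cdot \frac{q^d-1}{q^{d'}-1}$. The crucial arithmetic claim is that $\gcd\!\bigl(d, \tfrac{q^d-1}{q^{d'}-1}\bigr) = 1$, i.e. that $\frac{q^d-1}{q^{d'}-1} = 1 + q^{d'} + q^{2d'} + \cdots + q^{(d/d')-1 \cdot d'}$ is coprime to $d$: since $q \equiv 1 \pmod d$, each term $q^{jd'} \equiv 1 \pmod d$, so this sum is $\equiv d/d' \pmod d$; one then checks $\gcd(d/d', d)$... hmm, that is $d/d'$ in general, not $1$. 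So I would instead argue more carefully: $d \mid k \cdot \frac{q^d-1}{q^{d'}-1}$ combined with the analogous condition for being a $d'$-th power, namely $d' \mid k$; the cleanest route is to show $z \in (\F_{q^d}^*)^d \iff z \in (\F_{q^{d'}}^*)^{d'}$ directly by comparing $z^{(q^d-1)/d}$ and $z^{(q^{d'}-1)/{d'}}$, using that $\frac{q^d-1}{d} = \frac{q^{d'}-1}{d'} \cdot \frac{d'}{d} \cdot \frac{q^d-1}{q^{d'}-1}$ and that the order of $z$ divides $q^{d'}-1$.

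The main obstacle I expect is precisely pinning down this last congruence/index bookkeeping cleanly — making sure the factor $\frac{q^d-1}{q^{d'}-1}$ interacts correctly with $d$ and $d'$, and handling it uniformly rather than case-by-case in the prime factorization of $d$. I anticipate the slickest formulation uses Lemma \ref{lem: norm_reduction} applied to the extension $\F_{q^d}/\F_{q^{d'}}$: an element $z \in \F_{q^{d'}}$ is a $d$-th power in $\F_{q^d}$ iff $N_{\F_{q^d}/\F_{q^{d'}}}(z) = z^{d/d'}$ is a $d$-th power in $\F_{q^{d'}}$; writing $z = h^k$ this says $d \mid k \cdot (d/d')$, i.e. $d' \mid k$, i.e. $z$ is a $d'$-th power in $\F_{q^{d'}}^*$ — exactly the adjacency condition in $GP(q^{d'},d')$. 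That completes the identification of edge sets, and since the vertex sets already agree, the two graphs coincide.
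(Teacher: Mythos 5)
Your final argument is correct and is essentially the paper's: the paper likewise passes to the primitive root $h=g^{(q^d-1)/(q^{d'}-1)}$ of $\F_{q^{d'}}$ and uses $\frac{q^d-1}{q^{d'}-1}\equiv d/d' \pmod{d}$ to get $\gcd\bigl(d,\frac{q^d-1}{q^{d'}-1}\bigr)=d/d'$, so that $d\mid k\cdot\frac{q^d-1}{q^{d'}-1}$ iff $d'\mid k$ --- the fact you stumbled on (the gcd being $d/d'$ rather than $1$) is not an obstacle but exactly the point, since it is what converts $d$-th powers in $\F_{q^d}$ into $d'$-th (rather than $d$-th) powers in $\F_{q^{d'}}$. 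Your alternative route through Lemma~\ref{lem: norm_reduction} for the extension $\F_{q^d}/\F_{q^{d'}}$, where the norm of $z\in\F_{q^{d'}}^*$ collapses to $z^{d/d'}$, is a valid repackaging of the same index computation and closes the argument.
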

\begin{proof}
Let $g$ be a primitive root of $\F_{q^d}$. Then $g^{(q^d-1)/(q^{d'}-1)}$ is a primitive root of $\F_{q^{d'}}$; moreover, since $q \equiv 1 \pmod d$, it follows that
$$
\frac{q^d-1}{q^{d'}-1}=\sum_{j=0}^{d/d'-1} q^{d'j} \equiv \frac{d}{d'} \pmod d.
$$
Thus, $\gcd(d,\frac{q^d-1}{q^{d'}-1})=\frac{d}{d'}$. Let $x \in \F_{q^{d'}}^*$. Then $x$ is a $d$-th power in $\F_{q^d}$ if and only if $x$ is a $d'$-th power in $\F_{q^{d'}}$. The conclusion follows.
\end{proof}

As preparation for proving Theorem~\ref{thm:newmaxclique}, in the next proposition, we establish the existence of cliques of small size with prescribed degrees.

\begin{prop}\label{prop:predeg}
Let $d \geq 2$ and let $q \equiv 1 \pmod d$ be an odd prime power. Let $r$ be the smallest prime divisor of $d$. Let $d_1,d_2,\ldots, d_k$ be positive integers such that $d_1>1$ and $d_1\mid d_2 \mid \cdots \mid d_k \mid d$. Assume that
$$
q^r> \max \{(d+(k-1)r^{k-1})^2,e^{2(k-1)}\}.
$$
Then there is a clique $C=\{v_1,v_2, \ldots, v_k\}$ in $GP(q^d,d)$ such that $v_i$ has degree $d_i$ over $\F_q$ for each $1 \leq i \leq k$, and no two vertices in $C$ are Galois conjugates with respect to the field extension $\F_{q^d}/\F_q$.
\end{prop}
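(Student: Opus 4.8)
The plan is to build the clique $C = \{v_1, \dots, v_k\}$ one vertex at a time, choosing $v_i$ to lie in a suitable subfield of $\F_{q^d}$ so that its degree over $\F_q$ is exactly $d_i$, while ensuring that all pairwise differences $v_i - v_j$ are $d$-th powers in $\F_{q^d}$ and that no two of the $v_i$ are Galois conjugates. Since $d_1 \mid d_2 \mid \cdots \mid d_k$, the subfields $\F_{q^{d_i}}$ form a nested chain, and by Lemma~\ref{lem:subgraph} the subgraph induced on each $\F_{q^{d_i}}$ is $GP(q^{d_i}, d_i)$; this lets us work inside these subgraphs. The key point is that, to make each step possible, we want to keep track at stage $i$ of how many vertices $x$ of the ambient graph $GP(q^d,d)$, lying in the base field $\F_q$ — or more precisely, lying in $\F_{q^{d_{i+1}}} \setminus (\text{proper subfields})$ — are already adjacent to all of $v_1, \dots, v_i$; call this count $M_i$. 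We then need $M_i$ to be positive (indeed, large) so that a valid $v_{i+1}$ of the correct degree exists.

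First I would set up the base case: pick $v_1 \in \F_{q^{d_1}} \setminus \bigcup_{\ell \mid d_1, \ell < d_1} \F_{q^\ell}$ arbitrarily; such an element exists since $q^{d_1} - \sum_{\ell} q^\ell > 0$ under our hypotheses, and it automatically has degree exactly $d_1$ over $\F_q$. Then, inductively, having chosen $v_1, \dots, v_i$ pairwise non-conjugate, of the prescribed degrees, and forming a clique, I would count the number $M$ of $x \in \F_{q^{d_{i+1}}}$ with $(x - v_j)^{(q^d-1)/d} = 1$ for all $j \le i$. Here is where Theorem~\ref{thm:main} enters, applied with the base field taken to be $\F_{q^{d_{i+1}}}$ (note $q^{d_{i+1}} \equiv 1 \pmod d$ since $q \equiv 1 \pmod d$) and the extension $\F_{q^d}/\F_{q^{d_{i+1}}}$ of degree $n = d/d_{i+1}$; the $v_j$ have degree $d_j / d_{i+1}$... wait — more carefully, one should apply Theorem~\ref{thm:main} over the field $\F_{q^{d_{i+1}}}$ to the extension of degree $d/d_{i+1}$, where each $v_j$ (which lies in $\F_{q^{d_j}} \subseteq \F_{q^{d_{i+1}}}$) actually lies in the base field, so $n=1$ relative to that description won't help. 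The cleaner route: apply Theorem~\ref{thm:main} with base field $\F_q$, extension $\F_{q^d}/\F_q$ of degree $n = d$, counting solutions in $\F_q$ — but we want solutions in $\F_{q^{d_{i+1}}}$, not $\F_q$. So instead apply it with base field $\F_{q^{d_{i+1}}}$: the extension $\F_{q^d}/\F_{q^{d_{i+1}}}$ has degree $n = d/d_{i+1}$, each $v_j$ has some degree $d_j'$ over $\F_{q^{d_{i+1}}}$ (which is $1$ since $d_j \mid d_{i+1}$), giving a main term $q^{d_{i+1}} \prod_{j\le i} \gcd(d, d/d_{i+1})/d$ and an error term $O(i \sqrt{q^{d_{i+1}}})$. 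From the resulting count $M$ of solutions in $\F_{q^{d_{i+1}}}$, I subtract the $O(q^{d_{i+1}/r})$ solutions that happen to lie in a proper subfield of $\F_{q^{d_{i+1}}}$, and also discard the at most $i \cdot d$ solutions that are Galois conjugates of some $v_j$; the stated hypothesis $q^r > (d + (k-1)r^{k-1})^2$ together with $q^r > e^{2(k-1)}$ is precisely calibrated to guarantee the remaining count is strictly positive, yielding a valid choice of $v_{i+1}$ of degree exactly $d_{i+1}$.

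The main obstacle I anticipate is the bookkeeping in the inductive step: one must verify that after applying Theorem~\ref{thm:main} in the right subfield and subtracting off (i) elements lying in proper subfields of $\F_{q^{d_{i+1}}}$ and (ii) Galois conjugates of previously chosen vertices, the main term $q^{d_{i+1}}/d^i$ (times a bounded gcd factor) still dominates the accumulated error $O(i\sqrt{q^{d_{i+1}}}) + O(q^{d_{i+1}/r}) + O(id)$. The subfield contribution is the delicate piece, since $q^{d_{i+1}/r}$ can be comparable to $\sqrt{q^{d_{i+1}}}$ when $r = 2$; this is exactly why the smallest prime divisor $r$ of $d$ appears in the hypothesis, and why the bound is stated in terms of $q^r$ rather than $q$. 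I would handle this by noting $d_{i+1} \ge d_1 \ge r$, so $q^{d_{i+1}} \ge q^r$ is large by hypothesis, and by bounding the number of proper-subfield elements crudely by $\sum_{\ell \mid d_{i+1}, \ell < d_{i+1}} q^\ell \le (d_{i+1}) q^{d_{i+1}/r} \le d \cdot q^{d_{i+1}/r}$. Assembling these estimates and checking the arithmetic against the two hypotheses completes the argument; the factor $r^{k-1}$ and the $e^{2(k-1)}$ term should emerge naturally from iterating the per-step loss over $k-1$ steps and from controlling a product of the form $\prod (1 - O(1/\sqrt{q^r}))$ from below.
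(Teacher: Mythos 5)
Your proposal is essentially the paper's proof: build the clique greedily, and at step $j$ count the elements of $\F_{q^{d_j}}$ adjacent to all of $v_1,\dots,v_{j-1}$ --- via Lemma~\ref{lem:subgraph} plus Lemma~\ref{lem1} in the paper, which is the same computation as your application of Theorem~\ref{thm:main} with base field $\F_{q^{d_j}}$ and $n=d/d_j$ (main term $q^{d_j}/d_j^{j-1}$, error $O(j\sqrt{q^{d_j}})$) --- then discard the at most $(j-1)d_j$ Galois conjugates of earlier vertices; your extra exclusion of proper-subfield elements (at most $2\sqrt{q^{d_j}}$ of them) to force degree exactly $d_j$ is a point the paper's write-up actually glosses over, and it is absorbed into the same error term. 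The one step you leave unexecuted is the only place the hypothesis is used, and your guess about its provenance is off: there is no product $\prod\bigl(1-O(q^{-r/2})\bigr)$ to control, since the steps are independent; rather, one must verify $q^{d_j}/d_j^{j-1}>(j-1)\sqrt{q^{d_j}}+(j-1)d_j$ uniformly over all admissible $d_j$, and the term $e^{2(k-1)}$ enters precisely to make $t\mapsto \sqrt{q^t}/t^{k-1}$ increasing for $t\ge r$, so that the worst case is $d_j=r$, where the bound $q^r>(d+(k-1)r^{k-1})^2$ closes the argument.
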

\begin{proof}
We build such a clique inductively.  We first pick an arbitrary $v_1 \in \F_{q^{d_1}}$ such that $v_1$ has degree $d_1$ over $\F_q$. 

Given $2 \leq j \leq k$, suppose we have constructed a clique $C_{j-1}=\{v_1, v_2, \ldots, v_{j-1}\}$ in $GP(q^d,d)$ such that $v_i$ has degree $d_i$ over $\F_q$ for each $1 \leq i \leq j-1$, and no two vertices in $C_{j-1}$ are Galois conjugates with respect to the field extension $\F_{q^d}/\F_q$. We need to find $v_j \in \F_{q^{d_j}}$ with degree $d_j$, not a Galois conjugate of any of $v_1, v_2, \ldots, v_{j-1}$, such that $C_{j-1} \cup \{v_j\}$ forms a clique in $GP(q^d,d)$.

Note that $v_1, v_2, \ldots, v_{j-1},v_j \in \F_{q^{d_j}}$. Thus, by Lemma~\ref{lem:subgraph}, $C_{j-1} \cup \{v_j\}$ forms a clique in $GP(q^d,d)$ if and only if 
$C_{j-1} \cup \{v_j\}$ forms a clique in $GP(q^{d_j},d_j)$, or equivalently, if and only if $(v_j-v_i)^{(q^{d_j}-1)/d_j}=1$ for each $1 \leq i \leq j-1$. By Lemma~\ref{lem1}, the number of such $v_j$ is at least $q^{d_j}/d_j^{j-1}-(j-1)\sqrt{q^{d_j}}$. On the other hand, note that the number of elements in $\F_{q^{d_j}}$ that are Galois conjugates of one of $v_1, v_2, \ldots, v_{j-1}$ is at most $(j-1)d_j$. Therefore, if 
\begin{equation}\label{eq:j}
\frac{q^{d_j}}{d_j^{j-1}}>(j-1)\sqrt{q^{d_j}}+(j-1)d_j    
\end{equation}
holds, then we can find a desired $v_j$ and achieve our goal.

It remains to show that the given assumption $q^r> \max \{(d+(k-1)r^{k-1})^2,e^{2(k-1)}\}$ implies the inequality~\eqref{eq:j} for each $2 \leq j \leq k$. Consider the function $s(t)=t \log q-2(k-1)\log t$, where $t \geq r$ is a real number. Since $q^r> e^{2(k-1)}$, it follows that $s'(t)=\log q-2(k-1)/t>0$ when $t\geq r$. Therefore, $s(t) \geq s(r)$ when $t \geq r$, or equivalently $\sqrt{q^{t}}/t^{k-1} \geq \sqrt{q^{r}}/r^{k-1}$ when  $t \geq r$. Therefore, given $q^r>(d+(k-1)r^{k-1})^2$ where~$r$ is the smallest prime divisor of~$d$, we have for each $2 \leq j \leq k$ that
\begin{align*}
\frac{q^{d_j}}{d_j^{j-1}}-(j-1)\sqrt{q^{d_j}}-(j-1)d_j 
&\geq \frac{q^{d_j}}{d_j^{k-1}}-(k-1)\sqrt{q^{d_j}} -(k-1)d\\
&= (k-1)\sqrt{q^{d_j}} \bigg(\frac{\sqrt{q^{d_j}}}{(k-1)d_j^{k-1}}-1\bigg)-(k-1)d\\
&\geq (k-1)\sqrt{q^{r}} \bigg(\frac{\sqrt{q^{r}}}{(k-1)r^{k-1}}-1\bigg)-(k-1)d    \\
&>(k-1)^2r^{k-1} \bigg(\frac{d}{(k-1)r^{k-1}}\bigg)-(k-1)d=0,
\end{align*}
as required. 
\end{proof}

After one final elementary lemma, we will be ready to prove Theorem~\ref{thm:newmaxclique}. 

\begin{lem}\label{lem:div2}
Let $m,d\ge2$ be positive integers with $\rad(m)\mid \rad(d)$, and let~$r$ be the smallest prime divisor of~$m$. Then there exist positive integers~$k$ and $d_1, d_2, \ldots, d_k\ge r$ such that
$d_1d_2\cdots d_k=m$
and
$d_1\mid d_2 \mid \cdots \mid d_k \mid d$.
\end{lem}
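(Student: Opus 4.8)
The plan is to prove Lemma~\ref{lem:div2} by an explicit greedy factorization of~$m$ that respects the divisibility chain condition. The key point is that the hypothesis $\rad(m)\mid\rad(d)$ means every prime power dividing~$m$ may be ``carved off'' into factors that are themselves divisors of~$d$, so the obstruction is purely about arranging these factors into a chain $d_1\mid d_2\mid\cdots\mid d_k\mid d$ with all $d_i\ge r$.

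Here is the construction I would carry out. Write $m=\prod_{p\mid m} p^{a_p}$, and for each such prime~$p$ let $b_p\ge1$ be the exponent of~$p$ in~$d$ (which exists since $\rad(m)\mid\rad(d)$). For each prime~$p\mid m$, split its exponent $a_p$ greedily into a sum $a_p = e_{p,1}+e_{p,2}+\cdots$ where each $e_{p,\ell}\le b_p$ and all parts except possibly the last equal $b_p$; this produces a (weakly decreasing in the sense that the last is $\le$ the others) list of exponents. The numbers $p^{b_p}$ are then ``full'' factors of~$d$, and there is one possibly-smaller leftover $p^{e_p}$ with $1\le e_p\le b_p$ for each~$p$ with $a_p$ not divisible by $b_p$. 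I would then assemble the chain as follows: let $D=\prod_{p\mid m}p^{b_p}$, which divides~$d$ and is $\ge r$ since $r\mid D$; the ``full'' factors contribute some number of copies of~$D$ (actually one should be slightly more careful and make the repeated factor $D$ itself, using one copy of $D$ for each complete ``round''), and then fold in the leftover primes. Concretely, a cleaner route: repeatedly extract the factor $g_j=\prod_{p\mid m,\ a_p\ge 1}p^{\min(a_p,b_p)}$ and replace $a_p$ by $\max(a_p-b_p,0)$, relabeling so that the last extracted factor $g_1$ (the one with the smallest surviving exponents) comes first and the full factors $g_j=D$ come later; since at every stage $\min(a_p,b_p)\le b_p$, each $g_j\mid D\mid d$, and moreover $g_1\mid g_2\mid\cdots$ because once we pass the first round every factor is exactly $D$. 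Each $g_j$ is divisible by~$r$ (the smallest prime dividing~$m$ divides every nonempty $g_j$), so all factors are $\ge r$. Setting $d_i$ to be these $g_j$ in order gives $d_1d_2\cdots d_k=m$ and $d_1\mid\cdots\mid d_k\mid d$.

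One subtlety I would flag and handle carefully: the smallest prime~$r$ dividing~$m$ must divide \emph{every} factor $d_i$ in order to guarantee $d_i\ge r$, so the greedy extraction must keep~$r$ present in each round. This is automatic as long as the exponent of~$r$ in~$m$ is at least as large as the number of rounds, i.e.\ as long as $v_r(m)\ge k$; one must check that the greedy scheme above never produces more rounds than $v_r(m)$. This follows because the number of rounds equals $\max_p\lceil a_p/b_p\rceil$, and one needs $\lceil a_r/b_r\rceil\ge\lceil a_p/b_p\rceil$ for all~$p$ — which need \emph{not} hold in general, so the naive scheme can fail. The genuine fix is to define the chain differently: let $k=\max_p\lceil a_p/b_p\rceil$ and, for the levels where some prime has run out, simply omit that prime from that level's factor, but then pad using $r$. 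Since $r\mid d$ we are free to include extra factors of~$r$ in the \emph{later} (larger) terms of the chain to keep divisibility, but we cannot exceed $v_r(m)$ total. The clean resolution: observe $v_r(m)\ge \log_r m \ge \log_r(p^{a_p}) = a_p\log_r p\ge a_p \ge \lceil a_p/b_p\rceil$ for every~$p$, wait—more simply, $v_r(m)=a_r$ and $r^{a_r}\le m$, while $k=\max_p\lceil a_p/b_p\rceil\le\max_p a_p\le\log_2 m$. Hmm, this still needs $a_r\ge\log_2 m$, which is false. So the real argument must be: since $r\le p$ and $b_r\le b_p$ is false in general either. I would therefore adopt the safest construction — distribute so that the single repeated ``engine'' is $D=\prod p^{b_p}$ appearing $k-1$ times and the first factor $d_1=\prod p^{a_p - (k-1)b_p \text{ when positive}}\cdot(\text{made divisible by }r)$ — and verify directly that $d_1\ge r$ because $r\mid d_1$ by construction (we always keep at least one factor of~$r$ in~$d_1$, possible since $v_r(m)\ge k$, which I establish using $v_r(m)\ge \log_r m$ together with the bound $k\le\log_r m$ coming from $m=\prod d_i\ge r^k$). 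Making these counting inequalities consistent is the main obstacle, and I expect the correct bookkeeping to be: choose $k$ to be the largest integer with $r^k\mid m$ viable, i.e.\ just take $k$ minimal subject to the factorization existing, then the chain is forced and all bounds are clean.
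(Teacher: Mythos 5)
There is a genuine gap: the proposal never arrives at a complete construction. It cycles through several schemes, correctly notices that some of its own rescue inequalities fail (e.g.\ ``$v_r(m)\ge\lceil a_p/b_p\rceil$ for all $p$'' need not hold), invokes at least one false inequality as a fix ($v_r(m)\ge\log_r m$ is false in general, e.g.\ $m=2\cdot 3^{10}$), and ends with ``I expect the correct bookkeeping to be\dots'', which is an admission that the argument is unfinished. The misconception that derails you is the belief that each factor $d_i$ must be divisible by $r$ in order to satisfy $d_i\ge r$. That is not needed: $r$ is the \emph{smallest} prime dividing $m$, and every $d_i$ you construct is a product of primes dividing $m$, so any such $d_i$ that is $>1$ is automatically $\ge r$. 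The only thing to guarantee is that no factor is trivial. Once you drop the spurious requirement $r\mid d_i$, your own first greedy scheme (repeatedly extract $g_j=\prod_p p^{\min(a_p,b_p)}$, update the exponents, and reverse the order of the extracted factors) actually works; you abandoned a viable construction while chasing a nonexistent obstruction.

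For comparison, the paper's proof is a one-line explicit formula that avoids all of this bookkeeping: take $k=\max_p v_p(m)$ and set $d_i=\prod_{p^{\,k+1-i}\mid m}p$. Then each prime $p$ appears in exactly $v_p(m)$ of the $d_i$, so $d_1\cdots d_k=m$; the sets of primes are nested, so $d_1\mid d_2\mid\cdots\mid d_k=\rad(m)\mid\rad(d)\mid d$ (note that the exponents of $d$ are never needed, only $\rad(m)\mid\rad(d)$); and each $d_i$ is a nonempty product of primes dividing $m$, hence $\ge r$.
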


\begin{proof}
If we let~$k$ be the largest exponent of any prime in the prime factorization of~$m$, it is easy to check that the integers
$d_i = \prod_{p^{k+1-i}\mid m} p$
have the asserted properties.
\end{proof}

\begin{proof}[Proof of Theorem~\ref{thm:newmaxclique}]
Note that the assumption $q \equiv 1 \pmod d$ guarantees that $\F_q$ forms a clique in $GP(q^d,d)$. If $m=1$, the subfield $\F_q$ forms a maximal clique in $GP(q^d,d)$ \cite[Theorem 1.2]{Y23} as mentioned in the introduction. Thus we may assume that $m\ge2$ in the following.

Since $\rad(m) \mid \rad(d)$, Lemma~\ref{lem:div2} enables us to choose positive integers $d_1, d_2, \ldots, d_k\ge r$ so that $m=d_1d_2\cdots d_k$ and $d_1\mid d_2 \mid \cdots \mid d_k \mid d$. Note that $m\ge r^k$ and thus $k \leq \log_r m$. It follows that
$$
q>(8\log_r m+4)d^2m^2\geq (8k+4)d^2m^2 \geq (8k+4)d^2r^{2k}>\max \{d+(k-1)r^{k-1},e^{k-1}\}.
$$
To see the last inequality, note that 
$(8k+4)d^2r^{2k}>d+(k-1)r^{2k}>d+(k-1)r^{k-1}$ and $r^{2k}\geq 4^k>e^{k-1}$. By Proposition~\ref{prop:predeg}, there exists a clique $D=\{v_1,v_2, \ldots, v_k\}$ in $GP(q^d,d)$ such that~$v_i$ has degree~$d_i$ over~$\F_q$ for each $1 \leq i \leq k$, and no two vertices in~$D$ are Galois conjugates with respect to the field extension $\F_{q^d}/\F_q$.

Recall that $\F_q$ is a clique. Observe that $C=D \cup D'$ also forms a clique in $GP(q^d,d)$, where
$$
D'=\{x \in \F_q: v_i-x \text{ is a $d$-th power in $\F_{q^d}$ for all $1 \leq i \leq k$}\}.
$$
Since $m=d_1d_2\cdots d_k$ and $\sum_{i=1}^k d_i \leq kd$, it follows from  Theorem~\ref{thm:main} that
\begin{equation}\label{eq:D'}
\frac{q}{m}-kd\sqrt{q} \leq |D'|\leq \frac{q}{m}+kd\sqrt{q}.    
\end{equation}
By definition, $C$ cannot be expanded to a larger clique by adding another vertex in $\F_q$. Assume that there exists $v \in \F_{q^d}\setminus \F_q$ such that $v$ is not a Galois conjugate of~$v_i$ with respect to the field extension $\F_{q^d}/\F_q$ for any $1 \leq i \leq k$, and yet $C \cup \{v\}$ is still a clique. It would then follow that
$$
D'=D' \cap \{x \in \F_q: v-x \text{ is a $d$-th power in $\F_{q^d}$}\}.
$$
Theorem~\ref{thm:main} then implies that
\begin{equation}\label{eq:D'ub}
|D'|\leq \frac{q}{mr}+(k+1)d\sqrt{q}.
\end{equation}
Comparing inequality~\eqref{eq:D'ub} with the lower bound on $|D'|$ in inequality~\eqref{eq:D'}, we obtain
$$
\frac{q}{m}-kd\sqrt{q} \leq \frac{q}{mr}+(k+1)d\sqrt{q},
$$
which implies that
$$
q\leq \frac{(2k+1)d^2m^2}{(1-\frac{1}{r})^2}\leq (8k+4)d^2m^2\leq (8\log_r m+4)d^2m^2,
$$
violating the assumption that $q>(8\log_r m+4)d^2m^2$.

Let $C'$ be a maximal clique in $GP(q^d,d)$ such that $C \subset C'$. Based on the above discussions, each element in $C' \setminus C$ is a Galois conjugate of $v_i$ for some $1 \leq i \leq k$. It follows that $|C|\leq |C'|\leq |C|+k(d-1)$, and thus inequality~\eqref{eq:D'} implies that
$$
\frac{q}{m}-d\log_r m \cdot \sqrt{q}\leq \frac{q}{m}-kd\sqrt{q}+k \leq |C'|\leq \frac{q}{m}+kd\sqrt{q}+kd \leq \frac{q}{m}+d\log_r m \cdot (\sqrt{q}+1),
$$
as required.
\end{proof}

\section{Maximality of the $(\mathbb{F}_q,\alpha)$-construction}\label{sec5}

In this section, we prove Theorem~\ref{thm:GP} and Theorem~\ref{thm:P*}. We start by presenting the proof of Theorem~\ref{thm:GP}.

\begin{proof}[Proof of Theorem~\ref{thm:GP}]
Recall that for each vertex $u \in \F_{q^2}$, $N(u)$ is the $\F_q$-neighborhood of $u$ in the $d$-Paley graph $GP(q^2,d)$, that is, 
$$
N(u)=\{x \in \F_q: u-x \text{ is a $d$-th power in } \F_{q^2}\}.
$$
By \cite[Proposition 4.6]{GSY23}, $|N(u)|=\frac{q+1}{d}-1$ if $u \in \F_{q^2}\setminus \F_q$. Since $d \mid (q+1)$, the subfield $\F_q$ forms a clique. Since $q>10d^4/(d-1)^2>10d^2$, we have
$$
9+\frac{6}{\sqrt{q}}+\frac{1}{q}<9+\frac{6}{\sqrt{10}d}+\frac{1}{40}<10.
$$
It follows that
$$
(3\sqrt{q}+1)^2=9q+6\sqrt{q}+1<10q<q^2\bigg(\frac{1}{d}-\frac{1}{d^2}\bigg)^2,
$$
and thus
\begin{equation}\label{eq:lbub}
\frac{q}{d^2}+3\sqrt{q}<\frac{q+1}{d}-1.    
\end{equation}

Let $u \in \F_{q^2} \setminus \F_q$. Then $u$ and $u^q$ are Galois conjugates. Note that $N(u)=N(u^q)$. Indeed, if $x \in \F_q$, then $u^q-x=u^q-x^q=(u-x)^q$ is a $d$-th power in $\F_{q^2}$ if and only if $u-x$ is a $d$-th power in $\F_{q^2}$. We also claim that that $u$ and $u^q$ are adjacent if and only if $d\mid \frac{q+1}{2}$. Let $\beta \in \F_q^*$ be a non-square in $\F_q$, and let $\alpha \in \F_{q^2}$ such that $\alpha^2=\beta$. Then $\{1,\alpha\}$ forms a basis of $\F_{q^2}$ over $\F_q$, and thus we can write $u=x+y\alpha$ for some $x,y \in \F_q$ with $y \neq 0$. Since $u^q=(x+y\alpha)^q=x^q+y^q\alpha^q=x-y\alpha$, we have $u-u^q=2y\alpha$ being a $d$-th power in $\F_{q^2}$ if and only if $\alpha$ is a $d$-th power in $\F_{q^2}$, if and only if $d\mid \frac{q+1}{2}$. Thus, $N(u) \cup \{u,u^q\}$ is a clique when $d \mid \frac{q+1}{2}$, and $N(u) \cup \{u,u^q\}$ is not a clique when $d\nmid \frac{q+1}{2}$.

Let $C_u=N(u) \cup \{u\}$ when $d\nmid \frac{q+1}{2}$, and $C_u=N(u) \cup \{u,u^q\}$ when $d\mid \frac{q+1}{2}$. Suppose that $C_u$ is not a maximal clique. Then based on the above discussion, there is $v \in \F_{q^2} \setminus \F_q$ such that $v \neq u^q$, and $N(u) \cup \{u,v\}$ remains to be a clique. It follows that $N(u) \cap N(v)=N(u)$. By Theorem~\ref{thm:main2} and inequality~\eqref{eq:lbub}, we have
$$
|N(u) \cap N(v)| \leq \frac{q}{d^2}+3\sqrt{q}<\frac{q+1}{d}-1=|N(u)|,
$$
a contradiction. We conclude that the desired clique $C_u$ is maximal.
\end{proof}

Finally, we use a similar approach to show the maximality of cliques from $(\F_q,\alpha)$-constructions in Peisert graphs.

\begin{proof}[Proof of Theorem~\ref{thm:P*}]
When $7\leq q \leq 79$, we have used SageMath to verify the statement of the theorem. In the following discussion, we assume that $q \geq 83$.

Let $g$ be a primitive root of $\F_{q^2}$ and let $H$ be the subgroup of $\F_{q^2}^*$ of index $4$. Let $S=H \cup gH$.  Note that two vertices are adjacent in the Peisert graph $P_{q^2}^*$ if and only if their difference is in $S$. Also, note that the $4$-Paley graph $GP(q^2,4)$ is a subgraph of 
the Peisert graph $P_{q^2}^*$. Since $4 \mid (q+1)$, the subfield $\F_q$ forms a clique in $GP(q^2,4)$, and thus in $P_{q^2}^*$. For each vertex $u \in \F_{q^2}$, let $N(u)$ be the $\F_q$-neighborhood of $u$ in $P_{q^2}^*$, that is, 
$$
N(u)=\{x \in \F_q: u-x \in S\}=\{x \in \F_q: u-x \in H\} \cup \{x \in \F_q: u-x \in gH\}:=N_1(u) \cup N_2(u).
$$
Note that $N_1(u)$ is exactly the $\F_q$-neighborhood of $u$ in $GP(q^2,4)$, and thus $|N_1(u)|=\frac{q+1}{4}-1$ if $u \notin \F_q$. On the other hand, it is known that Peisert graph $P_{q^2}^*$ is a $(q^2, \frac{q^2-1}{2}, \frac{q^2-5}{4}, \frac{q^2-1}{4})$-strongly regular graph with the smallest eigenvalue $\frac{-1-q}{2}$ \cite{P01}. Thus, the subfield $\F_q$ forms a maximum clique in $P_{q^2}^*$ and $|N(u)|=\frac{q+1}{2}-1$ for each $u \in \F_{q^2} \setminus \F_q$ by the Hoffman bound \cite[Proposition 1.3.2]{BCN89}. 

Let $u \in \F_{q^2} \setminus \F_q$. For the sake of contradiction, assume that $N(u)\cup \{u\}$ is not a maximal clique. Then $u$ and $u^q$ are Galois conjugates. We claim that $N(u)\neq N(u^q)$. If $x \in \F_q$, then $u^q-x=(u-x)^q$, thus $u-x \in H$ if and only if $u^q-x \in H$, and $u^q-x \in g^3H$ if and only if $u-x \in gH$. Thus, if $N(u)=N(u^q)$, then we must have $N(u)=N_1(u)$, which is impossible since $|N(u)|>|N_1(u)|$. Since $N(u)\cup \{u\}$ is not maximal, there is $v \in \F_{q^2} \setminus \F_q$ such that $v \neq u^q$, and $N(u) \cap N(v)=N(u)$. 

Next we estimate $|N(u) \cap N(v)|$ using Theorem~\ref{thm:main3}. As a preparation, we need to express the indicator function on $S$ using characters. Let $\chi$ be the multiplicative character of $\F_{q^2}$ with order $4$ such that $\chi(g)=i$. It follows that
$$\big|\big(\chi(x)+1\big)\big(\chi(x)+i\big)\big|^2
=
\begin{cases}
8, & \text{if } x \in H \cup gH,\\
0,  & \text{if } x \in g^2H \cup g^3H.
\end{cases}
$$
On the other hand, for $x \in \F_{q^2}$, we have
\begin{align*}
\big|\big(\chi(x)+1\big)\big(\chi(x)+i\big)\big|^2
&=\big(2+\chi(x)+\overline{\chi}(x)\big)\big(2+i\overline{\chi}(x)-i\chi(x)\big)\\
&=4+(2i+2)\overline{\chi}(x)+(2-2i)\chi(x)+i(\overline{\chi}^2(x)-\chi^2(x))\\
&=4+(2i+2)\overline{\chi}(x)+(2-2i)\chi(x).
\end{align*}
Therefore, $\mathbf{1}_S\colon \F_{q^2}^* \to \C$, the indicator function of~$S$, can be expressed as
$$
4\cdot\mathbf{1}_S=2+(i+1)\overline{\chi}+(1-i)\chi.
$$
It follows that
\begin{align*}
16|N&(u) \cap N(v)|\\
&=\sum_{a \in \F_q} 4\cdot\mathbf{1}_S(u-a) \cdot 4\cdot\mathbf{1}_S(v-a)\\
&=\sum_{a \in \F_q} \big(2+(1+i)\overline{\chi}(u-a)+(1-i)\chi(u-a)\big) \big(2+(1+i)\overline{\chi}(v-a)+(1-i)\chi(v-a)\big)\\
&=4q + 2(1+i)\sum_{a \in \F_q} \overline\chi(v-a) + 2(1-i) \sum_{a \in \F_q} \chi(v-a) \\
&\qquad{}+ 2(1+i) \sum_{a \in \F_q} \overline\chi(u-a) + 2i \sum_{a \in \F_q} \overline\chi(u-a)\overline\chi(v-a) + 2 \sum_{a \in \F_q} \overline\chi(u-a) \chi(v-a) \\
&\qquad{}+ 2(1-i) \sum_{a \in \F_q} \chi(u-a) + 2 \sum_{a \in \F_q} \chi(u-a)\overline\chi(v-a) - 2i \sum_{a \in \F_q} \chi(u-a)\chi(v-a).
\end{align*}
Since $u,v\in \F_{q^2} \setminus \F_q$ are not Galois conjugates, we may apply Theorem~\ref{thm:main3} to bound these eight character sums: the four sums with one character value can be bounded by~$\sqrt{q}$, while the four sums with two character values can be bounded by~$3\sqrt{q}$. Since $|2(1\pm i)|=2\sqrt2$, we obtain
$$
16 |N(u) \cap N(v)|\leq 4q + 4(2\sqrt2\cdot\sqrt q) + 4(2\cdot 3\sqrt q) = 4q + (8\sqrt2+24)\sqrt q.
$$
Since $q \geq 83$, we conclude that
$$
|N(u) \cap N(v)|\leq \frac{q}{4}+\frac{\sqrt{2}+3}{2}\sqrt{q}< \frac{q-1}{2}=|N(u)|,
$$
a contradiction. Hence, $N(u)\cup \{u\}$ is a maximal clique.
\end{proof}

\section{Remarks and open questions}\label{sec6}

We conclude this paper with some remarks and open questions.

\begin{rem}
Peisert \cite[Section 6]{P01} showed that the Paley graph $P_{q^2}$ and the Peisert graph $P^*_{q^2}$ of the same order are not isomorphic when $q \geq 7$ by showing that $\operatorname{Aut}(P_{q^2}) \neq \operatorname{Aut}(P^*_{q^2})$. While the fact that these two graphs are not isomorphic seems obvious (at least when $q$ is sufficiently large), to the best knowledge of the authors, there is no known simple proof. Peisert's original proof relies on heavy machinery from group theory. 

Here we sketch a new proof that~$P_{q^2}$ and~$P_{q^2}^*$ are not isomorphic when $q \equiv 3 \pmod 4$ and $q\geq 7$, using the structure of maximal cliques from $(\F_q,\alpha)$-constructions.
Suppose otherwise that these two graphs are isomorphic. Then there is a graph isomorphism $\phi\colon P_{q^2}^* \to P_{q^2}$ that maps the subfield~$\F_q$ to~$\F_q$. Indeed, any graph isomorphism must map~$\F_q$, a maximum clique in~$P_{q^2}^*$, to some maximum clique in~~$P_{q^2}$; while any maximum clique in~$P_{q^2}$ can be mapped to a maximum clique containing~$0$ and~$1$ via an automorphism of~$P_{q^2}$. But it is known~\cite{B84} that the only maximum clique in~$P_{q^2}$ that contains~$0$ and~$1$ is~$\F_q$ itself.

Choose any $\alpha^* \in \F_{q^2} \setminus \F_q$, and let~$C^*$ be the clique in~$P_{q^2}^*$ containing~$\alpha^*$ and its neighbors in~$\F_q$. Note that~$C^*$ is maximal in~$P_{q^2}^*$ by Theorem~\ref{thm:P*}. Similarly, set $\alpha=\phi(\alpha^*)$ and let~$C$ be the clique in $P_{q^2}$ containing~$\alpha$ and its neighbors in~$\F_q$. However,~$C=\phi(C^*)$ is not maximal in~$P_{q^2}$ since $C \cup \{\alpha^q\}$ is a maximal clique (as mentioned in the introduction), which contradicts the existence of the graph isomorphism~$\phi$. 
\end{rem} 

\begin{rem}\label{rem:more}
By combining the ideas in the proofs of Theorems~\ref{thm:newmaxclique} and~Theorem~\ref{thm:GP}, we can show the following: if $d \geq 2$ is fixed and $k$ is a non-negative integer, then there is a maximal clique of size approximately $q/d^k$ in $GP(q^2,d)$, provided that $q \equiv -1 \pmod d$ is sufficiently large. We can also prove the following by modifying the proof of Theorem~\ref{thm:P*}: if $k$ is a non-negative integer, then there is a maximal clique of size approximately $q/2^k$ in $P_{q^2}^*$, provided that $q \equiv 3 \pmod 4$ is sufficiently large.
\end{rem}

\begin{rem}
Theorem~\ref{thm:newmaxclique} says that (for suitable values of~$d$, $q$, and~$m$) there are maximal cliques in $GP(q^d,d)$ that are approximately $\frac1m$ as large as the clique~$\F_q$ itself, which is believed to be a maximum clique (essentially the only one, see \cite{Y23++} for a related discussion). These cliques have a specific structure, where we choose a small number of points outside of $\F_q$ and then add their common neighbors within $\F_q$. We believe that this construction yields all large maximal cliques in $GP(q^d,d)$ up to automorphisms of the graph and certain induced subgraphs, and we formulate a conjecture intended to serve as a sort of converse to Theorem~\ref{thm:newmaxclique}.

\begin{conj} \label{spectrum conj}
Fix an integer $d\ge2$. Consider all sequences $\{r_q / q\}$, indexed by odd prime powers $q\equiv1\pmod d$, such that $r_q$ is the cardinality of some maximal clique in the $d$-Paley graph $GP(q^d,d)$. Then the set of limit points of all such sequences equals $\{0\} \cup \{\frac1m\colon \rad(m) \mid \rad(d)\}$.
\end{conj}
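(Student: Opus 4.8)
We describe a possible approach to Conjecture~\ref{spectrum conj}; a complete proof lies beyond the reach of the methods of this paper. Write $L$ for the set of limit points in question. The inclusion $\{0\}\cup\{1/m\colon\rad(m)\mid\rad(d)\}\subseteq L$ is essentially already known: for each fixed $m$ with $\rad(m)\mid\rad(d)$, Theorem~\ref{thm:newmaxclique} (together with \cite[Theorem~1.2]{Y23} when $m=1$) produces, for every sufficiently large $q\equiv1\pmod d$, a maximal clique of $GP(q^d,d)$ of size $\frac{q}{m}+O(\sqrt{q})$, so $\frac{1}{m}\in L$; and choosing a sequence $m_j\to\infty$ with $\rad(m_j)\mid\rad(d)$, together with prime powers $q_j\equiv1\pmod d$ satisfying the hypothesis of Theorem~\ref{thm:newmaxclique}, produces maximal cliques whose relative sizes tend to $0$, so $0\in L$. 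The entire content of the conjecture is therefore the reverse inclusion: if $C_q$ is a maximal clique in $GP(q^d,d)$ with $|C_q|/q\to c>0$, then $c=1/m$ for some integer $m$ with $\rad(m)\mid\rad(d)$.

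The plan for the reverse inclusion rests on a \emph{structure theorem} for maximal cliques of size $\gg q$. First one would use the automorphisms $x\mapsto a\,\sigma(x)+b$ of $GP(q^d,d)$ — with $a$ a $d$-th power in $\F_{q^d}^{*}$, $b\in\F_{q^d}$, and $\sigma\in\operatorname{Gal}(\F_{q^d}/\F_p)$ — and, where appropriate, Lemma~\ref{lem:subgraph}, to pass to a normalized configuration; the structural claim to be proved is that a large maximal clique must then have the form $D\cup D'$, where $D=\{v_1,\dots,v_k\}$ has bounded size, the $v_i$ are pairwise non-conjugate with $v_i$ of degree $d_i$ over $\F_q$ (necessarily $d_i\mid d$), and $D'=\{x\in\F_q\colon v_i-x\text{ is a }d\text{-th power in }\F_{q^d}\text{ for all }i\}$ is their common $\F_q$-neighborhood, possibly together with finitely many Galois conjugates of the $v_i$ — precisely the shape of the cliques constructed for Theorem~\ref{thm:newmaxclique}. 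Granting this, Theorem~\ref{thm:main} gives $|D'|=q\prod_{i=1}^{k}\frac{\gcd(dd_i,d)}{dd_i}+O(\sqrt{q})=\frac{q}{d_1\cdots d_k}+O(\sqrt{q})$, and writing $m=d_1\cdots d_k$ one has $\rad(m)\mid\rad(d)$ since each $d_i\mid d$. The remaining contributions — the $O(1)$ vertices of $D$ and their finitely many conjugates, and the $O(\sqrt{q})$ Weil error — are negligible compared with $q$; moreover $c>0$ forces $k$, and hence $m$, to be bounded (otherwise $m\ge2^k\to\infty$ and $c=0$). Dividing by $q$ and letting $q\to\infty$ then yields $c=1/m$ with $\rad(m)\mid\rad(d)$, as required.

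The crux — and the reason this remains a conjecture — is the structure theorem itself. The character-sum machinery used here controls the \emph{size} of a hypothetical large clique but reveals nothing about its \emph{shape}; indeed, even the weaker statement that $\F_q$ is a \emph{maximum} clique of $GP(q^d,d)$ for $d\ge3$ is only conjectural. A plausible line of attack is to upgrade the polynomial and algebraic-curve arguments of Blokhuis and Sziklai, which characterize the maximum cliques of $GP(q^2,d)$ as affine images of $\F_q$, into a \emph{stability} statement — to the effect that a clique whose size is within $o(q)$ of the subfield bound must be ``close to'' one of the configurations above — and then to bootstrap such approximate structure to the exact description of $D\cup D'$. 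Proving such a stability result, and in particular ruling out exotic families of large maximal cliques not arising from the $(\F_q,\alpha)$-type construction, is the principal obstacle.
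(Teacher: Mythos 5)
This statement is a conjecture, and the paper offers no proof of it: the authors only observe (in the surrounding remark) that Theorem~\ref{thm:newmaxclique} yields the inclusion $\{0\}\cup\{\frac1m\colon\rad(m)\mid\rad(d)\}\subseteq L$, and they explicitly state that the reverse inclusion is conjectural. Your treatment of the easy inclusion is the same as the paper's and is correct (including the limit point $0$, obtained by letting $m\to\infty$ through integers with $\rad(m)\mid\rad(d)$, and the point $1$ via the maximality of $\F_q$ from \cite[Theorem~1.2]{Y23}). Your sketch of the reverse inclusion is honest about resting entirely on an unproven ``structure theorem'' for large maximal cliques; that structure theorem is not established anywhere in the paper and is precisely the content of the conjecture, so what you have written is a correct account of the known half plus a plausible programme for the open half, not a proof. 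The conditional computation you give — that a clique of the form $D\cup D'$ with $D=\{v_1,\dots,v_k\}$ and $d_i\mid d$ has $|D'|=q/(d_1\cdots d_k)+O(\sqrt q)$ by Theorem~\ref{thm:main}, forcing the limit to be $1/m$ with $\rad(m)\mid\rad(d)$ — is consistent with the paper's Theorem~\ref{thm:newmaxclique} and would indeed close the argument if the structural claim were available. In short: no error, but also no proof; the gap you identify (ruling out large maximal cliques not of $(\F_q,\alpha)$-type, e.g.\ via a stability version of the Blokhuis--Sziklai characterization) is exactly the gap the authors leave open.
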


\noindent Theorem~\ref{thm:newmaxclique} implies that every element of $\{0\} \cup \{\frac1m\colon \rad(m) \mid \rad(d)\}$ is such a limit point; we conjecture that there are no others.

As an example, consider all sequences $\{r_q / q\}$ indexed by odd prime powers $q$ such that $r_q$ is the cardinality of some maximal clique in the Paley graph $GP(q^2,2)$. Theorem~\ref{thm:newmaxclique} implies that every element of the set
$
\mathcal{L}_2=\{0\} \cup \{1,\frac12,\frac14,\frac18,\dots\}
$
is a limit point of some such sequence, and we conjecture that every limit point of any such sequence is in~$\mathcal{L}_2$. 

Baker, Ebert, Hemmeter, and Woldar \cite{BEHW96} (see also \cite{GKSV18,GMS22})  conjectured that the $(\F_q,\alpha)$-construct\-ion gives a second largest maximal clique in the Paley graph $GP(q^2,2)$; equivalently, there is no maximal clique of size between $\frac{q+\delta_q}{2}+1$ and $q-1$, where $\delta_q=1$ if $q \equiv 1 \pmod 4$ and $\delta_q=3$ if $q \equiv 3 \pmod 4$. To the best of our knowledge, no progress has been made towards their conjecture. A consequence of their conjecture would be that there are no limit points of any sequence $\{r_q/q\}$ in the interval $(\frac12,1)$. Our conjecture can thus be viewed as a further extension of their conjecture.  It is likely that new insights towards this conjecture would also shed light on estimating the clique number of Paley graphs of non-square order, a major open problem in additive combinatorics and analytic number theory \cite{CL07}. We refer to \cite{Y23++} and the references therein for recent progress towards the latter problem.

We similarly conjecture the analogous converses of the two statements in Remark~\ref{rem:more}: the analogous set of limit points for $GP(q^2,d)$ should be exactly $\mathcal{L}_d = \{0\} \cup \{ 1, \frac1d, \frac1{d^2}, \dots\}$, while the analogous set of limit points for $P^*_{q^2}$ should be exactly~$\mathcal{L}_2$.
\end{rem}

\section*{Acknowledgments}

The authors thank the anonymous referees for their valuable comments and suggestions.
The authors thank Daqing Wan for the clarification of a result in~\cite{W97} and Sergey Goryainov for sharing the conjecture related to Peisert graphs. The authors also thank Shamil Asgarli for helpful comments on a preliminary version of the manuscript. The first author was supported in part by a Natural Sciences and Engineering Council of Canada Discovery Grant. The research of the second author was supported in part by an NSERC fellowship.

\bibliographystyle{abbrv}
\bibliography{main}

\begin{thebibliography}{10}

\bibitem{AY22}
S.~Asgarli and C.~H. Yip.
\newblock Van {L}int--{M}ac{W}illiams' conjecture and maximum cliques in {C}ayley graphs over finite fields.
\newblock {\em J. Combin. Theory Ser. A}, 192:Paper No. 105667, 23, 2022.

\bibitem{BEHW96}
R.~D. Baker, G.~L. Ebert, J.~Hemmeter, and A.~Woldar.
\newblock Maximal cliques in the {P}aley graph of square order.
\newblock {\em J. Statist. Plann. Inference}, 56(1):33--38, 1996.

\bibitem{B84}
A.~Blokhuis.
\newblock On subsets of {${\rm GF}(q^2)$} with square differences.
\newblock {\em Nederl. Akad. Wetensch. Indag. Math.}, 46(4):369--372, 1984.

\bibitem{BCN89}
A.~E. Brouwer, A.~M. Cohen, and A.~Neumaier.
\newblock {\em Distance-regular graphs}, volume~18 of {\em Ergebnisse der Mathematik und ihrer Grenzgebiete (3) [Results in Mathematics and Related Areas (3)]}.
\newblock Springer-Verlag, Berlin, 1989.

\bibitem{SC}
S.~D. Cohen.
\newblock Clique numbers of {P}aley graphs.
\newblock {\em Quaestiones Math.}, 11(2):225--231, 1988.

\bibitem{CL07}
E.~S. Croot, III and V.~F. Lev.
\newblock Open problems in additive combinatorics.
\newblock In {\em Additive combinatorics}, volume~43 of {\em CRM Proc. Lecture Notes}, pages 207--233. Amer. Math. Soc., Providence, RI, 2007.

\bibitem{GM15}
C.~Godsil and K.~Meagher.
\newblock {\em Erd{\H{o}}s-{K}o-{R}ado theorems: algebraic approaches}, volume 149 of {\em Cambridge Studies in Advanced Mathematics}.
\newblock Cambridge University Press, Cambridge, 2016.

\bibitem{GKSV18}
S.~Goryainov, V.~V. Kabanov, L.~Shalaginov, and A.~Valyuzhenich.
\newblock On eigenfunctions and maximal cliques of {P}aley graphs of square order.
\newblock {\em Finite Fields Appl.}, 52:361--369, 2018.

\bibitem{GMS22}
S.~Goryainov, A.~Masley, and L.~Shalaginov.
\newblock On a correspondence between maximal cliques in {P}aley graphs of square order.
\newblock {\em Discrete Math.}, 345(6):Paper No. 112853, 10, 2022.

\bibitem{GSY23}
S.~Goryainov, L.~Shalaginov, and C.~H. Yip.
\newblock On eigenfunctions and maximal cliques of generalised {P}aley graphs of square order.
\newblock {\em Finite Fields Appl.}, 87:Paper No. 102150, 2023.

\bibitem{HS91}
J.~W.~P. Hirschfeld and T.~Sz\H{o}nyi.
\newblock Constructions of large arcs and blocking sets in finite planes.
\newblock {\em European J. Combin.}, 12(6):499--511, 1991.

\bibitem{HS90}
J.~W.~P. Hirschfeld and T.~Sz\H{o}nyi.
\newblock A problem on squares in a finite field and its application to geometry.
\newblock In {\em Advances in finite geometries and designs ({C}helwood {G}ate, 1990)}, Oxford Sci. Publ., pages 169--176. Oxford Univ. Press, New York, 1991.

\bibitem{LN97}
R.~Lidl and H.~Niederreiter.
\newblock {\em Finite fields}, volume~20 of {\em Encyclopedia of Mathematics and its Applications}.
\newblock Cambridge University Press, Cambridge, second edition, 1997.

\bibitem{P01}
W.~Peisert.
\newblock All self-complementary symmetric graphs.
\newblock {\em J. Algebra}, 240(1):209--229, 2001.

\bibitem{R02}
M.~Rosen.
\newblock {\em Number theory in function fields}, volume 210 of {\em Graduate Texts in Mathematics}.
\newblock Springer-Verlag, New York, 2002.

\bibitem{S92}
T.~Sz\H{o}nyi.
\newblock Note on the existence of large minimal blocking sets in {G}alois planes.
\newblock {\em Combinatorica}, 12(2):227--235, 1992.

\bibitem{S97}
T.~Sz\H{o}nyi.
\newblock Some applications of algebraic curves in finite geometry and combinatorics.
\newblock In {\em Surveys in combinatorics, 1997 ({L}ondon)}, volume 241 of {\em London Math. Soc. Lecture Note Ser.}, pages 197--236. Cambridge Univ. Press, Cambridge, 1997.

\bibitem{S99}
P.~Sziklai.
\newblock On subsets of {${\rm GF}(q^2)$} with {$d$}th power differences.
\newblock {\em Discrete Math.}, 208/209:547--555, 1999.

\bibitem{S01}
P.~Sziklai.
\newblock A lemma on the randomness of {$d$}-th powers in {${\rm GF}(q),\ d\mid q-1$}.
\newblock {\em Bull. Belg. Math. Soc. Simon Stevin}, 8(1):95--98, 2001.

\bibitem{W97}
D.~Wan.
\newblock Generators and irreducible polynomials over finite fields.
\newblock {\em Math. Comp.}, 66(219):1195--1212, 1997.

\bibitem{Y22}
C.~H. Yip.
\newblock On maximal cliques of {C}ayley graphs over fields.
\newblock {\em J. Algebraic Combin.}, 56(2):323--333, 2022.

\bibitem{Y23++}
C.~H. Yip.
\newblock Exact values and improved bounds on the clique number of cyclotomic graphs, 2023.
\newblock arXiv:2304.13213.

\bibitem{Y23}
C.~H. Yip.
\newblock Maximality of subfields as cliques in {C}ayley graphs over finite fields.
\newblock {\em Algebr. Comb.}, 6(4):901--905, 2023.

\bibitem{Y24}
C.~H. {Yip}.
\newblock {Erd\H{o}s--Ko--Rado theorem in Peisert-type graphs}.
\newblock {\em Canad. Math. Bull.}, 67(1):176--187, 2024.

\end{thebibliography}

\end{document}